\newtheorem{theorem}{Theorem}[section]
\newtheorem*{hopf}{Hopf Decomposition Theorem}
\newtheorem{proposition}[theorem]{Proposition}
\newtheorem{claim}[theorem]{Claim}
\newtheorem{corollary}[theorem]{Corollary}
\newtheorem{theoremletter}{Theorem}
\theoremstyle{definition}
\newtheorem{question}{Question}
\newtheorem*{rmk}{Remark}
\crefname{question}{Question}{Questions}
\title{Dynamics of composition operators induced by odometers}
\author{Udayan B. Darji}
\address{Department of Mathematics, University of Louisville, Louisville, Kentucky 40292}
\email{ubdarj01@gmail.com}
\author{Daniel Gomes}
\address{Departamento de Matemática, Instituto de Matemática, Estatística e Computação Científica, Universidade Estadual de Campinas, Rua Sérgio Buarque de Holanda, 651, 13083-970, Campinas-SP, Brazil}
\email{danielgomes@ime.unicamp.br}
\author{Régis Varão}
\address{Departamento de Matemática, Instituto de Matemática, Estatística e Computação Científica, Universidade Estadual de Campinas, Rua Sérgio Buarque de Holanda, 651, 13083-970, Campinas-SP, Brazil}
\email{varao@unicamp.br}
\date{}
\keywords{Linear dynamics, composition operators, supercyclicity, distributional chaos, Li-Yorke chaos.}
\thanks{The second author was supported by the São Paulo Research Foundation (FAPESP), grant 2021/02672-2. The third author was supported by the São Paulo Research Foundation (FAPESP), grant 24/15612-6, and  by Conselho Nacional de Desenvolvimento Científico e Tecnológico (CNPq), grant 314978/2023-2. }
\begin{document}

\begin{abstract} 
   We study the linear dynamics of composition operators induced by measurable transformations on finite measure spaces, with particular emphasis on operators induced by odometers. Our first main result shows that, on a finite measure space, supercyclicity of a composition operator implies hypercyclicity. This phenomenon has no analogue in several classical settings and highlights a rigidity specific to the finite-measure context.

We then focus on composition operators induced by odometers and show that many dynamical properties that are distinct for weighted backward shifts collapse in this setting. In particular, for such operators, supercyclicity, Li–Yorke chaos, hypercyclicity, weak mixing, and Devaney chaos are all equivalent.

In contrast to this collapse, we show that the classical equivalence between Devaney chaos and the Frequent Hypercyclicity Criterion for weighted backward shifts fails for odometers. Specifically, we construct a mixing, chaotic, and distributionally chaotic composition operator that does not satisfy the Frequent Hypercyclicity Criterion. This combination of rigidity and separation demonstrates that the dynamical behavior of composition operators induced by odometers differs sharply from that of weighted backward shifts.
\end{abstract}

\maketitle

\section{Introduction}

Linear dynamics studies the long-term behavior of iterates of continuous linear operators on Banach or Fréchet spaces, and lies at the intersection of operator theory and topological dynamics. A central theme in the subject is the analysis of various notions of recurrent or chaotic behavior, which form a natural hierarchy of dynamical complexity: supercyclicity, Li-Yorke chaos, hypercyclicity, mixing, Devaney chaos, distributional chaos, and frequent hypercyclicity. Understanding how these properties relate to one another, whether they coincide, whether one implies another, or whether they can be completely separated, is one of the driving questions in the field.

Weighted backward shifts on $\ell^p$-spaces form a fundamental and well-understood class of examples. They provide explicit models separating many dynamical properties, but they do not distinguish Devaney chaos from frequent hypercyclicity and satisfying the Frequent Hypercyclicity Criterion.

Weighted shifts arise as a special case of composition operators on $L^p$-spaces. If $(X,\mathcal{B},\mu)$ is a $\sigma$-finite measure space and $f:X\to X$ is non-singular, the associated composition operator $T_f(\varphi)=\varphi\circ f$ defines a broad class of operators whose dynamics reflect the underlying measurable system. The operator-theoretic study of such maps was first undertaken in linear dynamics in \cite{bayart2018topological} and has since developed in numerous works  (see, for instance, \cite{bongiorno2022linear,daniello2021generalized,D’Aniello2025supercyclic,darji2021,gomes2024}). Weighted backward shifts arise precisely when the system is dissipative and exhibits bounded distortion, a setting in which the dynamics of $T_f$ mirror those of classical shifts \cite{daniello2022shiftlike,D’Aniello2025supercyclic,daniello2024interplay}.

By the Hopf Decomposition Theorem, every non-singular system splits into a dissipative and a conservative part. While the dissipative case is largely captured by weighted shifts, the conservative case is less explored. Among conservative systems, non-singular odometers form the simplest nontrivial examples. 

In this paper we study the linear dynamics of composition operators induced by odometers. We show that their behavior differs markedly from that of weighted shifts. In particular, we construct mixing and chaotic composition operators induced by odometers that do not satisfy the Frequent Hypercyclicity Criterion. Conversely, several dynamical notions collapse in this setting: for composition operators induced by odometers, supercyclicity, Li--Yorke chaos, hypercyclicity, weak mixing, and Devaney chaos are all equivalent.

\section{Background and main results}
We let $\mathbb{N}$ denote the set of positive integers, i.e., $\mathbb{N}= \{1, 2, \ldots\}$.

Let $\mathcal{X}$ be a separable Banach space over $\mathbb{K}=\mathbb{R}$ or $\mathbb{C}$, and let $T:\mathcal{X}\to \mathcal{X}$ be a continuous linear operator. Several notions of chaotic behavior have been studied in linear dynamics. We recall some of those relevant for the present work. We say that $T$ is
\begin{itemize}
    \item \textit{supercyclic} if there exists $x\in\mathcal{X}$ such that $\{\lambda T^n x : \lambda\in \mathbb{K},\, n\ge 1\}$ is dense in $\mathcal{X}$;
    \item \textit{hypercyclic} if there exists $x\in\mathcal{X}$ such that $\{T^n x : n\ge 1\}$ is dense in $\mathcal{X}$;
    \item \textit{Devaney chaotic} if $T$ is hypercyclic and admits a dense subset of periodic points;
    \item \textit{mixing} if for every pair of nonempty open sets $U,V\subseteq\mathcal{X}$ there exists $n_0\in\mathbb{N}$ such that $T^n(U)\cap V\neq\varnothing$ for all $n\ge n_0$;
    \item \textit{frequently hypercyclic} if there exists $x\in \mathcal{X}$ such that for every non-empty open set $U\subseteq \mathcal{X}$
    \[
    \liminf_{N\to\infty}
    \frac{1}{N}\#\{\, 1\leq n \leq N : T^{n}x \in U \,\}  > 0;
    \]
    \item \textit{Li--Yorke chaotic} if there exists $x\in\mathcal{X}$ such that
    \[
        \liminf_{n\to\infty}\|T^n x\| = 0 
        \qquad\text{and}\qquad 
        \limsup_{n\to\infty}\|T^n x\| = \infty;
    \]
    \item \textit{distributionally chaotic} if there exist $x\in\mathcal{X}$ and sets $D,E\subseteq\mathbb{N}$ with $\overline{\mathrm{dens}}(D)=\overline{\mathrm{dens}}(E)=1$ such that 
    \[
        \lim_{n\in D} \|T^n x\| = 0
        \qquad\text{and}\qquad
        \lim_{n\in E}\|T^n x\| = \infty.
    \]
\end{itemize}
We note that Li-Yorke chaos and distributional chaos are defined in general metric spaces. The definitions we stated above are characterization theorems in the context of linear dynamics given respectively in \cite{bernardes2020li} and \cite{bernardes2018}.
For general references on linear dynamics we refer to the books \cite{bayart2009dynamics} and \cite{grosse2011linear}.

A powerful tool to show that an operator is mixing, Devaney chaotic, frequently hypercyclic or distributionally chaotic is to show that it satisfies the Frequent Hypercyclicity Criterion (see \cite{bayart2006frequently, BERNARDES20132143, bonilla2007frequently}).

However, the converse is not always true, that is, there exist frequently hypercyclic, Devaney chaotic, mixing and distributionally chaotic operators that do not satisfy the Frequent Hypercyclicity Criterion. Therefore one can ask how ``chaotic" an operator can be without satisfying the Frequent Hypercyclicity Criterion. For weighted backward shifts, we have that the Frequent Hypercyclicity Criterion coincides with frequent hypercyclicity and Devaney chaos (see \cite[Theorem~4.8 and Proposition~9.13]{grosse2011linear}). We will show in Theorem \ref{thm: dist chaos} that in our context there exist operators that are Devaney chaotic, mixing and distributionally chaotic that do not satisfy the Frequent Hypercyclicity Criterion. 

We now introduce the main framework of our study: \textit{composition operators acting on $L^p$, $1\leq p<\infty$.} Let $(X,\mathcal{B},\mu)$ be a $\sigma$-finite measure space and let $f:X\to X$ be a measurable and non-singular transformation (that is, $\mu(B)=0$ implies $\mu(f^{-1}(B))=0$ for all $B\in\mathcal{B}$). For $1\le p<\infty$, we denote by
\[
    L^p(X;\mathbb{K})
    = \Big\{ \varphi:X\to\mathbb{K} : \int_X |\varphi|^p\, d\mu < \infty \Big\}
\]
the usual Banach space of $p$-integrable functions. The composition operator $T_f : L^p(X;\mathbb{K})\to L^p(X;\mathbb{K})$ induced by $f$ is defined by
\[
    T_f(\varphi) = \varphi\circ f .
\]
The non-singularity of $f$ ensures that $T_f$ is well-defined. We additionally assume the existence of a constant $c>0$ such that 
\[
    \mu(B) \le c\, \mu(f^{-1}(B))\quad\text{for all } B\in\mathcal{B},
\]
so that $T_f$ is continuous (see \cite{singh1993composition}).

Supercyclic composition operators have been studied recently in \cite{D’Aniello2025supercyclic}, where supercyclicity is characterized in the context of dissipative systems with bounded distortion. We show that if the measure $\mu$ is finite, then supercyclicity coincides with hypercyclicity even if we do not assume that the system is dissipative.

\begin{theoremletter}\label{thm: sc implies hc}
    Let $(X,\mathcal{B},\mu)$ be a finite measure space and $f:X\to X$ be a measurable non-singular transformation such that $T_f:L^p(X;\mathbb{K})\to L^p(X;\mathbb{K})$ is continuous. If $T_f$ is supercyclic, then it is hypercyclic.
\end{theoremletter}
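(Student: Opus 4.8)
The plan is to exploit the single structural feature that distinguishes the finite-measure setting: the constant function $\mathbf{1}$ (with $\mathbf{1}(x)=1$ for all $x$) lies in $L^p(X;\mathbb{K})$ \emph{precisely} because $\mu(X)<\infty$, and it is a fixed point of $T_f$, since $T_f\mathbf{1}=\mathbf{1}\circ f=\mathbf{1}$. Around this fixed point $T_f$ enjoys two symmetries inherited from its being a positive lattice homomorphism: the translation identity $T_f^n(g+t\mathbf{1})=T_f^n g+t\mathbf{1}$ for every scalar $t$, and the modulus identity $|T_f^n g|=T_f^n|g|$. I will also use the mean normalization $\int_X \tfrac{d(\mu\circ f^{-n})}{d\mu}\,d\mu=(\mu\circ f^{-n})(X)=\mu(X)$, the quantitative form of the fact that $\|T_f^n\mathbf{1}\|_p=\mu(X)^{1/p}$ is independent of $n$. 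Finiteness of $\mu$ enters the argument exactly through these two facts.

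First I would recast both the hypothesis and the conclusion in transitive form. By the Birkhoff transitivity theorem, hypercyclicity of $T_f$ on the separable space $L^p(X;\mathbb{K})$ is equivalent to topological transitivity: for all nonempty open $U,V$ there is $n\ge 1$ with $T_f^n(U)\cap V\neq\varnothing$. In the same way, supercyclicity is equivalent to the weaker scaled transitivity: for all nonempty open $U,V$ there exist $n\ge1$ and $\lambda\in\mathbb{K}\setminus\{0\}$ with $\lambda\,T_f^n(U)\cap V\neq\varnothing$. Thus the whole content of the theorem is the elimination of the scalar $\lambda$, and the role of the fixed point $\mathbf{1}$ is to supply the rigidity needed to normalize $\lambda$ to $1$.

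The mechanism I would pursue is the following. Fix a supercyclic vector $\varphi$. Applying scaled transitivity to approximate $\mathbf{1}$ produces times $n_k$ and scalars $\lambda_k$ with $\lambda_k T_f^{n_k}\varphi\to\mathbf{1}$; taking norms pins down $|\lambda_k|$ asymptotically as $\mu(X)^{1/p}/\|T_f^{n_k}\varphi\|_p$, so the admissible scalars along these return times to $\mathbf{1}$ are essentially governed by the orbit magnitudes $\|T_f^{n_k}\varphi\|_p$. The modulus identity lets me pass to the positive cone and discard the phase of $\lambda$, while the translation identity $T_f^{n}(g+t\mathbf{1})=T_f^n g+t\mathbf{1}$ lets me transport an approximation of a general target, written as $t\mathbf{1}+h$, into one in which the $\mathbf{1}$-component already carries its correct, unscaled magnitude. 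Cancelling $\lambda$ on the fixed direction and absorbing the residual action on $h$ by spreading small perturbations of $\mathbf{1}$ should upgrade scaled transitivity to genuine transitivity, hence to hypercyclicity.

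The hard part will be precisely this scalar normalization. The times realizing $\lambda_k T_f^{n_k}\varphi\to\mathbf{1}$ need not coincide with the times realizing the approximation of an arbitrary target, so the forced scalars cannot be substituted directly, and a priori $|\lambda_k|$ may drift to $0$ or $\infty$ as the orbit magnitudes oscillate. Making the cancellation rigorous will therefore require either (i) showing that the magnitudes $\|T_f^{n}\varphi\|_p$ visit every scale densely, so that for each target one may choose a time at which the forced scalar is as close as desired to the required value, or (ii) bypassing the explicit scalar by a Baire-category transitivity argument in which the constant-translation symmetry realizes the cancellation simultaneously for a dense family of targets. I expect route (ii), run through the positive cone via the modulus identity and anchored at the fixed point $\mathbf{1}$, to be the cleaner path, with the finite measure hypothesis used only through $\mathbf{1}\in L^p(X;\mathbb{K})$ and the constancy of $\|T_f^n\mathbf{1}\|_p$.
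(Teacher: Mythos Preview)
Your outline correctly isolates the one structural advantage of the finite-measure setting, the fixed vector $\mathbf{1}$, and the lattice identities you list are true and relevant. But the proposal stops exactly at the point you yourself flag as ``the hard part'': eliminating the scalar. Neither route you sketch is carried out. Route~(i), that $\|T_f^{n}\varphi\|_p$ visits every scale densely, is an unproved claim with no clear reason to hold for a general non-singular $f$; the Radon--Nikodym derivatives $d(\mu\circ f^{-n})/d\mu$ can behave quite irregularly. Route~(ii) is only a slogan: you do not explain what open sets the Baire argument would intersect, nor how the translation identity alone forces $\lambda=1$ on the non-constant component. Knowing that $|\lambda_k|\sim\mu(X)^{1/p}/\|T_f^{n_k}\varphi\|_p$ \emph{along the times hitting $\mathbf{1}$} says nothing about the scalars forced at the times hitting an arbitrary target, and the translation identity only shifts the $\mathbf{1}$-component of an iterate, it does not rescale it. As written, the argument does not close.

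The paper's proof avoids this normalization problem entirely by never trying to produce a hypercyclic vector directly. Instead it invokes the measure-theoretic characterization of hypercyclicity for composition operators (Bayart--Darji--Pires): $T_f$ is hypercyclic iff for every $\varepsilon>0$ there exist $k$ and $B\in\mathcal{B}$ with $\mu(X\setminus B)<\varepsilon$, $\mu(f^{-k}(B))<\varepsilon$, and $\mu^*(f^k(B))<\varepsilon$. The set $B$ is manufactured from $\mathbb{R}_+$-supercyclicity by approximating the constant $2$ by $\varphi$ and the constant $-2$ by $\lambda\,\varphi\circ f^k$ with $\lambda>0$; the sign incompatibility (one positive, one negative, same positive scalar) forces the level sets $\{|\varphi-2|<1\}$ and $\{|\lambda\varphi\circ f^k+2|<1\}$ to be large while their $f^{\pm k}$-images are small. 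The finite measure enters only so that the constants $\pm 2$ lie in $L^p$ and the Chebyshev-type estimates on their complements make sense. The complex case is then reduced to the real one by splitting a complex supercyclic vector into real and imaginary parts, showing one of them is $\mathbb{R}$-supercyclic for the real operator, and passing back via weak mixing. None of this machinery appears in your sketch; if you want to salvage your approach, you would need either a genuine rigidity lemma forcing $|\lambda|\to 1$ along \emph{every} approximating sequence, or to abandon the direct orbit argument and aim at the measure-theoretic criterion instead.
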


Measure-theoretic properties of $f$, particularly conservativity and dissipativity, play a key role in understanding the dynamics of $T_f$ (see, for instance, \cite{daniello2022shiftlike, D’Aniello2025supercyclic, darji2021, gomes2024}). We say that the system $(X,\mathcal{B},\mu,f)$ is
\begin{itemize}
    \item \textit{conservative} if for every $B\in\mathcal{B}$ with $\mu(B)>0$ there exists $n\in\mathbb{N}$ such that $\mu(B\cap f^{-n}(B))>0$;
    \item \textit{dissipative} if there exists $W\in\mathcal{B}$ such that the sets $f^n(W)$, $n\in\mathbb{Z}$, are pairwise disjoint and 
    \[
        X = \bigcup_{n\in\mathbb{Z}} f^n(W).
    \]
\end{itemize}
The next theorem, which can be found in \cite[Theorem~3.2]{Krengel1985}, provides the canonical decomposition of a non-singular system into its conservative and dissipative parts. 

\begin{hopf}
    If $(X,\mathcal{B},\mu)$ is a $\sigma$-finite measure space and $f:X\to X$ is a non-singular measurable transformation, then $X$ can be written as a disjoint union of two $f$-invariant sets $\mathcal{C}(f)$ and $\mathcal{D}(f)$ such that $f|_{\mathcal{C}(f)}$ is conservative and $f|_{\mathcal{D}(f)}$ is dissipative.
\end{hopf}

We are particularly interested in a class of conservative composition operators: the ones induced by odometers. Let $\{\alpha_n\}_{n=1}^\infty$ be a sequence of integers such that $\alpha_n\geq 2$ for all $n\geq 1$. For each $n\geq 1$, consider $A_n=\{0,\ldots\,\alpha_n-1\}$ and let $X=\prod_{n=1}^\infty A_n$ endowed with the product topology. We define the odometer $f:X\to X$ by: if $x=x^*:=(\alpha_1-1,\alpha_2-1,\alpha_3-1,\ldots)$, then $f(x)=(0,0,0,\ldots)$. If $x=(x_1,x_2,x_3,\ldots) \neq x^*$, let $l(x)=\min\{j\geq 1:x_j\neq \alpha_j-1\}$ and define $f(x)$ by
\[
f(x)_n=\begin{dcases}
    0 & n<l(x),\\
    x_n+1 & n=l(x),\\
    x_n & n>l(x).
\end{dcases}
\]

We may consider the $A_n$ as groups with the usual operation of mod $\alpha_n$ addition. This yields a natural group operation on $X$, namely point-wise addition with carry over. As such, $X$ is a topological group. 
Note that each $k \in \mathbb{N}$ has a unique representation in $X$, namely $(k(1),k(2),\ldots) \in X$ that $k = \sum_{i=1}^{\infty} k(i) \Pi_{j=1}^{i-1} \alpha_j$. Naturally, $k(i) =0$ for all but finitely many $i$. Hence, for $x \in X$ and $k \in \mathbb{N}$, $f^k(x)$ is simply the sum of $k= (k(1), k(2), \ldots) +x $, under the group operation on $X$. Similarly, for each $k \in \mathbb{N}$, $-k$ has a unique representation in $X$, i.e., there exists $y \in X$ such that $y + (k(1), k(2), \ldots) = \underline{0}$ where $\underline{0}$ is the identity of $X$. In particular, $y = (y_1, y_2, \ldots)$ is given by
\[
y_i =\begin{dcases}
    0 & i < j ,\\
    \alpha_j - k(j) & i = j,\\
     \alpha_i - k(i) -1 & i >j .
\end{dcases}
\]
where $j$ is the least integer where $k(j) \neq 0$. Note that $y(i)$ is $\alpha_i-1$ for all but finitely many $i$. As earlier $f^{-k}(x)$ is simply the sum of $y +x$ in $X$ under the group operation of $X$.

Let $\mathcal{B}$ be the Borel $\sigma$-algebra of $X$. For every $n\geq 1$, let $\mu_n$ be a probability on $A_n$ such that $\mu_n(a)>0$ for all $a\in A_n$ (we use the standard notation $\mu_n(a)$ in place of $\mu_n(\{a\})$) and let $\mu=\prod_{n=1}^\infty \mu_n$ be the product probability on $X$. We will always assume that $\prod_{n=1}^\infty \max \{\mu_n(a):a\in A_n\}=0$, so that the measure $\mu$ is non-atomic. We call $(X,\mu,f)$ an \textit{odometer system}. For a treatment of odometers from the ergodic theory point of view, we refer to \cite{danilenko2023ergodic} and \cite{hawkins2021ergodic}.

If our results regarding the composition operators are valid for both $\mathbb{K}= \mathbb{R}$ and $\mathbb{C}$, we write $L^p(X)$ instead of $L^p(X;\mathbb{K})$. If $f:X\to X$ is an odometer, $T_f:L^p(X)\to L^p(X)$ is continuous if, and only if (see \cite{danilenko2023ergodic}),
\begin{equation}
 \inf \Big\{ \lambda_{n}(j) \prod_{k=0}^{n-1} \lambda_{k}(0) : l \ge 1,\; j \in A_{n} \Big\}>0,
\tag{$\ast$} \label{odoCont}   
\end{equation}

where 
\[
\lambda_n(j)=\frac{\mu_n(j)}{\mu_n(j-1)}
\]
if $j\neq 0$ and
\[
\lambda_n(0)=\frac{\mu_n(0)}{\mu_n(\alpha_n-1)}.
\]

We have the following basic results for all odometer systems.
\begin{theoremletter}\label{thm:periodicNotFHC}
  Suppose that $(X,\mu,f)$ is an odometer system such that $T_f:L^p(X)\to L^p(X)$ is continuous. Then the following statements hold.
  \begin{enumerate}
      \item The set of periodic points of $T_f$ is dense in $L^p(X)$.
      \item If $p \ge 2$, then $T_f$ fails to satisfy the Frequent Hypercyclicity Criterion. 
  \end{enumerate}
\end{theoremletter}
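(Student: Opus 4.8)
The two statements are essentially independent, so I would treat them separately.

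\textbf{Part (1).} The plan is to exhibit an explicit dense family of periodic points. For each $n$, set $m_n=\prod_{j=1}^{n}\alpha_j$ and observe that, under the group identification described above, the integer $m_n$ corresponds to the element of $X$ having a $1$ in coordinate $n+1$ and $0$ elsewhere. Hence $f^{m_n}(x)=x+m_n$ alters only the coordinates of index $\ge n+1$ (the carries propagate upward) and leaves $x_1,\dots,x_n$ unchanged. Consequently, any $\varphi\in L^p(X)$ that is measurable with respect to the first $n$ coordinates (a \emph{cylinder function}) satisfies $T_f^{m_n}\varphi=\varphi\circ f^{m_n}=\varphi$, so it is a periodic point of $T_f$. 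Since $\mu$ is a probability measure and the cylinder functions form a dense subspace of $L^p(X)$ (simple functions over the generating algebra of cylinder sets are dense), the set of periodic points is dense. This step is routine.

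\textbf{Part (2).} The key idea is that condition (i) of the Frequent Hypercyclicity Criterion already fails for every nonzero vector, so no admissible dense set $X_0$ and map $S$ can exist; in particular only condition (i) is needed, and the choice of $S$ is irrelevant. Suppose for contradiction that $T_f$ satisfies the criterion via a dense set $X_0$ and a map $S:X_0\to X_0$, and pick any nonzero $x\in X_0$. Condition (i) asserts that $\sum_{n\ge 0}T_f^{n}x$ converges unconditionally in $L^p(X)$. First I would convert unconditional convergence into summability of $p$-th powers of norms; this is exactly where $p\ge 2$ enters. By Khintchine's inequality, unconditional convergence of $\sum_n g_n$ in $L^p$ forces $\big(\sum_n|g_n|^2\big)^{1/2}\in L^p$, and since $p/2\ge 1$ one has the pointwise bound $\big(\sum_n|g_n|^2\big)^{p/2}\ge\sum_n|g_n|^p$, whence $\sum_n\|g_n\|_p^p<\infty$ (equivalently, $L^p$ has cotype $p$ for $p\ge 2$). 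Applying this with $g_n=T_f^{n}x$ yields $\sum_{n\ge 0}\|T_f^{n}x\|_p^p<\infty$.

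On the other hand, I would show this sum is infinite using conservativity. Writing $\hat T$ for the transfer operator adjoint to $T_f$, duality against the constant function gives $\|T_f^{n}x\|_p^p=\int_X(|x|^p\circ f^{n})\,d\mu=\int_X|x|^p\,\hat T^{n}\mathbf{1}\,d\mu$, and therefore $\sum_{n\ge 0}\|T_f^{n}x\|_p^p=\int_X|x|^p\big(\sum_{n\ge 0}\hat T^{n}\mathbf{1}\big)\,d\mu$. Since odometer systems are conservative (they are ergodic and $\mu$ is non-atomic, and an ergodic non-singular map on a non-atomic space has trivial dissipative part), the standard characterization of the conservative part gives $\sum_{n\ge 0}\hat T^{n}\mathbf{1}=\infty$ $\mu$-a.e.; as $|x|^p>0$ on a set of positive measure, the integral diverges. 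This contradicts the finiteness obtained above, so $T_f$ cannot satisfy the criterion.

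The main obstacle is the interface between the functional-analytic and the ergodic sides: one must recognize that condition (i) alone suffices, translate unconditional convergence into the quantitative estimate $\sum_n\|T_f^{n}x\|_p^p<\infty$ (the step that genuinely requires $p\ge 2$), and match it against the conservativity identity $\sum_n\hat T^{n}\mathbf{1}=\infty$. The technical points to pin down are the conservativity of the odometer and the transfer-operator characterization of the conservative part; once these are in place, the contradiction is immediate.
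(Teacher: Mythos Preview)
Your proposal is correct and follows the paper's route, which proceeds entirely by citing \cite{darji2021}: your Part~(1) is precisely the content of Example~4.3 there, and your Part~(2) unpacks Theorems~3.1 and~3.3 there. Those results say that for $p\ge 2$ the Frequent Hypercyclicity Criterion forces a summability condition---this is exactly your cotype-$p$ step yielding $\sum_n\|T_f^{n}x\|_p^p<\infty$---and that on a finite measure space this condition forces dissipativity, which is your transfer-operator step $\sum_n\hat T^{n}\mathbf{1}=\infty$ a.e.\ on the conservative part; so you have given a self-contained version of the same argument.
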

We also have that a variety of chaos-type properties coalesce for odometer systems.
\begin{theoremletter}\label{thm: LY implies hc}
    If $(X,\mu,f)$ is an odometer system such that  $T_f$ is continuous, then the following are equivalent:
    \begin{enumerate}
        \item $T_f$ is Li-Yorke chaotic;
        \item $T_f$ is supercyclic;
        \item $T_f$ is hypercyclic;
        \item $T_f$ is Devaney chaotic.
    \end{enumerate}
\end{theoremletter}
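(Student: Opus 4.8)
The plan is to verify all four equivalences by closing a short cycle, where three of the four links follow immediately from results already established and only one link carries real content.

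First, the easy links. The implication (3)$\Rightarrow$(4) is exactly \Cref{thm:periodicNotFHC}(1): since $T_f$ has a dense set of periodic points, hypercyclicity upgrades at once to Devaney chaos, while the converse (4)$\Rightarrow$(3) is the definition. The implication (3)$\Rightarrow$(2) is trivial, since a dense orbit is in particular a dense scaled orbit, and (2)$\Rightarrow$(3) is precisely \Cref{thm: sc implies hc} applied to the probability space $(X,\mu)$. Finally, (3)$\Rightarrow$(1) needs no structure at all: if $\varphi$ is a hypercyclic vector, then its orbit is dense, hence unbounded and accumulating at $0$, so $\liminf_n\|T_f^n\varphi\|=0$ and $\limsup_n\|T_f^n\varphi\|=\infty$, which is Li--Yorke chaos. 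Thus the theorem reduces to the single hard implication (1)$\Rightarrow$(3); by \Cref{thm: sc implies hc} it would in fact suffice to prove the a priori weaker (1)$\Rightarrow$(2).

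To attack (1)$\Rightarrow$(3), I would first make the dynamics of the norms explicit. Writing $w_n=\tfrac{d(\mu\circ f^{-n})}{d\mu}$ for the Radon--Nikodym cocycle, one has the identity $\|T_f^n\varphi\|_p^p=\int_X|\varphi|^p\,w_n\,d\mu$, and the carry structure of the odometer shows that for a.e.\ $x$ the product defining $w_n(x)$ terminates, so $w_n(x)$ is a finite product of ratios $\mu_i(\cdot)/\mu_i(\cdot)$, i.e.\ of powers of the $\lambda_i(j)^{\pm1}$. A key structural observation is that for a cylinder function the orbit norm $\|T_f^n\mathbf 1_C\|$ is \emph{periodic} (with period $\alpha_1\cdots\alpha_m$ when $C$ depends on the first $m$ coordinates) and stays bounded and bounded away from $0$, because on those coordinates $f$ acts as a finite cyclic odometer. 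Consequently a Li--Yorke vector can never be a cylinder function: its oscillation is forced to be carried by the tail coordinates. The plan is to extract from a Li--Yorke vector a purely numerical oscillation condition on the data $(\mu_i)$ — the existence of subsequences along which the tail weights blow up (producing $\limsup=\infty$) and along which they collapse (producing $\liminf=0$) — and, crucially, to observe that this condition is invariant under the homogeneous group action, so that the oscillation available at one scale is available at every scale.

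The heart of the argument is then to show that this same oscillation condition is already sufficient for hypercyclicity. Here I would use minimality of the odometer and the abelian group structure to build a hypercyclic vector by a layered construction: distribute mass on deeper and deeper cylinder blocks, tuned so that along the blow-up subsequence the iterates $T_f^{n_k}\varphi$ approximate prescribed cylinder targets (possible because the tail weights can be made arbitrarily large there), while along the complementary behaviour the orbit returns near $0$; assembling these approximations yields topological transitivity, equivalently hypercyclicity. I expect the main obstacle to be exactly this last step: promoting the single semi-irregular vector supplied by Li--Yorke chaos to a vector with a genuinely \emph{dense} orbit. This is where the rigidity of the odometer is indispensable and where the behaviour departs sharply from weighted backward shifts: for shifts the symbol is non-injective and mass escapes off the edge, which lets $T^n\varphi\to 0$ for finitely supported $\varphi$ and thereby decouples Li--Yorke chaos from hypercyclicity; for the odometer the map is a measure-class-preserving bijection with periodic cylinder norms, so collapse and blow-up can only be produced in a coherent, scale-homogeneous way, and this coherence is precisely what forces the two notions to coincide.
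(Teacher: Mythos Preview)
Your reduction to the single implication (1)$\Rightarrow$(3) is correct and matches the paper; the remaining links follow from \Cref{thm:periodicNotFHC}(1), \Cref{thm: sc implies hc}, and the definitions exactly as you say.

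For (1)$\Rightarrow$(3), however, your plan diverges from the paper's and does not close. The paper neither extracts a numerical oscillation condition on $(\mu_i)$ nor constructs a hypercyclic vector by hand. It works entirely through set-level characterizations: from Li--Yorke chaos, bijectivity of $f$, and finiteness of $\mu$, \cite[Theorem~1.5]{bernardes2020li} supplies a set $B$ of positive measure and $n_k\to\infty$ with $\mu(f^{n_k}(B))\to 0$; the target is the hypercyclicity criterion of \cite[Theorem~1.1]{bayart2018topological}, which here asks for sets $A$ with $\mu(A)>1-\varepsilon$ and iterates $j$ with $\mu(f^j(A))<\varepsilon$. The only genuine work is upgrading the small set $B$ to the large set $A$, and this is where your homogeneity intuition is right but your mechanism is not: the paper applies the Lebesgue density theorem to find a cylinder $[b_1,\ldots,b_N]$ in which $B$ has relative measure $>1-\varepsilon/2$, writes $B\cap[b_1,\ldots,b_N]=\{b_1\}\times\cdots\times\{b_N\}\times M$, and then uses the product and group structure of the odometer to place the same tail set $M$ under \emph{every} depth-$N$ cylinder simultaneously. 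A single iterate $j$, obtained from a suitable $n_k$ by adjusting its first $N$ digits (with a case split on the carry at position $N{+}1$), then sends this large set to a set of measure $<\varepsilon$.

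The gap in your proposal is that the ``purely numerical oscillation condition on the data $(\mu_i)$'' is never formulated, and there is no reason a Li--Yorke vector should produce one: via the Radon--Nikodym identity you wrote down, what it produces is exactly a set of positive measure with shrinking images, i.e.\ the input to the paper's argument. From there the issue is the \emph{spreading} step, not a pointwise cocycle analysis, and your ``layered construction'' of a hypercyclic vector is both harder than necessary and not made precise enough to evaluate; the density-plus-translation argument replaces it entirely.
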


 Moreover, we construct an explicit example of an odometer system $(X,\mu,f)$ which shows that distributional chaos can occur in an odometer system. This example enjoys a variety of other properties but does not satisfy the Frequent Hypercyclicity Criterion. This is markedly different from the weighted backward shifts and dissipative context.

\begin{theoremletter}\label{thm: dist chaos}
    There exists an odometer system $(X,\mu,f)$ such that $T_f:L^p(X)\to L^p(X)$ is mixing, distributionally chaotic and  Devaney chaotic. Moreover, for $p \ge 2$, $T_f$ fails to satisfy the Frequently Hypercyclic Criterion. 
\end{theoremletter}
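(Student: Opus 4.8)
The plan is to build the example on a dyadic odometer and to separate the genuinely new content—mixing and distributional chaos—from the parts that come for free. First I would record the reductions. By \Cref{thm:periodicNotFHC}(1), every continuous odometer system has a dense set of periodic points for $T_f$, and by \Cref{thm:periodicNotFHC}(2), for $p\ge 2$ any such $T_f$ fails the Frequent Hypercyclicity Criterion; so the failure of the criterion needs no further work, and Devaney chaos reduces to hypercyclicity (dense periodicity being automatic). Since mixing implies hypercyclicity, it suffices to produce a single odometer system, satisfying the continuity condition \eqref{odoCont}, for which $T_f$ is both mixing and distributionally chaotic.

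For the construction I would take $\alpha_n=2$ for all $n$ and Bernoulli weights $\mu_n=(1-\varepsilon_n,\varepsilon_n)$ with $\varepsilon_n\to 0$. A direct computation shows that \eqref{odoCont} reduces to a lower bound of the form $\varepsilon_n\gtrsim\prod_{k<n}\varepsilon_k$, which holds automatically once the $\varepsilon_n$ decay (e.g. $\varepsilon_n=2^{-n}$), so continuity is never an obstruction. The key analytic object is the Radon--Nikodym cocycle $W_n:=\dfrac{d(\mu\circ f^{-n})}{d\mu}$, for which $\|T_f^n\varphi\|_p^p=\int_X|\varphi|^p\,W_n\,d\mu$ and $\int_X W_n\,d\mu=1$; on cylinders $W_n(y)=\mu(\mathrm{cyl}(y-n))/\mu(\mathrm{cyl}(y))$, so the whole analysis is governed by how translation by $n$ distorts cylinder measures, i.e. by the binary digits of $n$ at the levels where $\varepsilon$ is small.

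For mixing I would first establish the criterion: if there are sets $G_n$ with $\mu(G_n)\to 0$ and $(\mu\circ f^{-n})(G_n)\to 1$, then $T_f$ is mixing. Indeed, for bounded $\varphi,\psi$ (a dense class) set $\eta_n=\varphi\,\mathbf 1_{X\setminus G_n}+(\psi\circ f^{-n})\,\mathbf 1_{G_n}$ and check directly that $\eta_n\to\varphi$ while $T_f^n\eta_n\to\psi$, using $\mu(G_n)\to 0$ and $\mu(f^{-n}G_n)\to 1$ respectively; this gives $T_f^n(U)\cap V\neq\varnothing$ for all large $n$. Taking $G_n=\{W_n>R_n\}$ with $R_n\to\infty$ slowly, Markov's inequality makes $\mu(G_n)\to 0$ automatic, so the criterion reduces to $W_n\to\infty$ in $(\mu\circ f^{-n})$-probability. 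I would verify this from the digit structure: for a $\mu$-typical $x$ the value $W_n(x+n)$ carries the factor $(1-\varepsilon_{M+1})/\varepsilon_{M+1}$ coming from the top nonzero digit of $n$ at level $M+1$ (and $M\to\infty$ with $n$), and since $\varepsilon_{M+1}\to 0$ this factor alone forces $W_n\to\infty$ in probability. This yields mixing, hence hypercyclicity and, with \Cref{thm:periodicNotFHC}(1), Devaney chaos.

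The hard part is distributional chaos, and it is where the scheme $(\varepsilon_n)$ must be refined. I would construct a distributionally irregular vector $\varphi=\sum_j c_j\varphi_j$ as a weighted sum of indicators of cylinders placed at a rapidly increasing (lacunary) sequence of levels $L_1<L_2<\cdots$. The cylinders are chosen so that, at level $L_j$, subtraction by $n$ either clears a low-probability digit—making the level-$L_j$ factor of $W_n$ large, of order $\varepsilon_{L_j}^{-1}$—or creates one, making that factor small; which case occurs is dictated by the $L_j$-th digit of $n$, which is constant on blocks of length $q_{L_j-1}$. This produces long time-windows on which $\|T_f^n\varphi\|$ is driven up and long windows on which it is driven down. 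Since distributional chaos only requires the exceptional sets $D,E$ to have \emph{upper} density one, I would tune the coefficients $c_j$ and the gaps $L_{j+1}-L_j$ by a gliding-hump argument so that on an increasing sequence of blocks the scale-$L_j$ term dominates and forces $\|T_f^n\varphi\|\to\infty$, and on a disjoint increasing sequence of blocks another term forces $\|T_f^n\varphi\|\to 0$, the remaining terms being negligible in the relevant window. The main obstacle is exactly this simultaneous control—arranging one family of blocks to give $\|T_f^n\varphi\|\to 0$ and a disjoint family to give $\|T_f^n\varphi\|\to\infty$, each of upper density one—while respecting the continuity bound $\varepsilon_n\gtrsim\prod_{k<n}\varepsilon_k$ and leaving the mixing established above intact. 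Once the irregular vector is in hand, distributional chaos follows, and together with the earlier steps this establishes every assertion of the theorem.
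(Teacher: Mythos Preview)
Your reductions are correct, and your mixing argument (the criterion via sets $G_n$ with $\mu(G_n)\to 0$ and $\mu(f^{-n}G_n)\to 1$, verified through the top binary digit of $n$ and the Radon--Nikodym cocycle) is sound; the paper simply cites \cite{bongiorno2022linear} for this step, so your direct route is a legitimate alternative.

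The gap is entirely in the distributional chaos step. You correctly flag that the naive scheme $\varepsilon_n=2^{-n}$ must be refined and that a lacunary gliding-hump is needed, but you never specify either the refinement or the hump, and this is where all the content lies. With $\varepsilon_n=2^{-n}$, any cylinder carrying a $1$ at level $L$ has measure $\le 2^{-L}$, which makes it essentially impossible to bound $\mu(f^{-n}C_j)=\|T_f^n\mathbf 1_{C_j}\|_p^p$ \emph{below} by a fixed constant on a positive-density set of times---exactly what the $\limsup\|T_f^n\varphi\|=\infty$ half of your irregular vector demands. The paper resolves this with a concrete two-scale scheme: $p_n=2^{-n}$ except at the sparse levels $m_k=4^k$, where $p_{m_k}=2^{-k}$ is dramatically larger. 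It then bypasses the direct construction of an irregular vector altogether, applying instead a set-theoretic criterion \cite[Theorem~32]{Bernardes2026}: one exhibits sets $B_k$ (built on the special coordinates $m_k,m_{k+1},\dots$) and an index set $I$ of upper density one so that $\mu(f^{-n}B_k)\to 0$ along $I$ while $\mu(f^{-n}B_k)>1/16$ on a fixed fraction of $[1,2^{m_k-1}]$; the verification is a careful carry analysis in binary. The two-scale weights are precisely what allow both conditions to hold at once. Your gliding-hump approach could in principle be pushed through, but it would require the same kind of two-rate refinement of $(\varepsilon_n)$ and, on top of that, control of the interactions among the summands $c_j\varphi_j$---work that the Bernardes et al.\ criterion absorbs. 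As written, the proposal identifies the obstacle without overcoming it.
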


In the following, we collect some  questions that might be of interest. Throughout, assume that $(X,\mathcal{B},\mu)$ is a $\sigma$-finite measure space and $f:X\to X$ is a measurable non-singular transformation such that $T_f:L^p(X)\to L^p(X)$ is continuous. 

The first question is the analogue of Theorem \ref{thm: sc implies hc} for Li-Yorke chaos.
\begin{question}
    Suppose $\mu$ is finite and $T_f$ is Li-Yorke chaotic. Does this imply $T_f$ is hypercyclic? Such is the case if $f$ is dissipative as this implies that $T_f$ satisfies the Frequent Hypercyclicity Criterion (see \cite[Theorems~3.1 and ~3.3]{darji2021}). More generally, if $(X,\mu,f)$ is a conservative system with $\mu$ finite, does Theorem~\ref{thm: LY implies hc} hold?
\end{question}

\begin{question}
    Is there an odometer system $(X,\mu,f)$ such that $T_f$ is hypercyclic but is not frequently hypercyclic or distributionally chaotic? 
\end{question}

The proofs of the main results are distributed across the subsequent sections.

\section{Proof of Theorem A }\label{sec: sc}
In this section we study supercyclicity for composition operators, giving a series of results that lead to the proof of Theorem \ref{thm: sc implies hc}. Let $\mathcal{X}$ be a Banach space over $\mathbb{K}=\mathbb{R}$ or $\mathbb{C}$ and $T:\mathcal{X}\to \mathcal{X}$ be a continuous linear operator. We say that $T$ is
\begin{itemize}
    \item $\mathbb{R}$-\textit{supercyclic} if there exists $x\in \mathcal{X}$ such that $\{\lambda T^nx: \lambda\in \mathbb{R}, n\geq 1\}$ is dense in $\mathcal{X}$;
    \item $\mathbb{R}_+$-\textit{supercyclic} if there exists $x\in \mathcal{X}$ such that $\{\lambda T^nx: \lambda\in \mathbb{R_+}, n\geq 1\}$ is dense in $\mathcal{X}$;
\end{itemize}
It was shown by Berm\'udez et al. \cite[Theorem 2.1]{bermudez2002} that $\mathbb{R}$- and $\mathbb{R}_+$-supercyclic operators coincide. In the next theorem we show that if $\mu$ is finite, then $T_f$ being supercyclic is equivalent to it being hypercyclic, by applying the characterization of hypercyclic  composition operators given in \cite[Theorem~1.1]{bayart2018topological}.

\begin{theorem}\label{thm: r-sc implies hc}
    Suppose that $\mu(X)<\infty$ and $T_f:L^p(X;\mathbb{K})\to L^p(X;\mathbb{K})$ is $\mathbb{R}$-supercyclic. Then $T_f$ is hypercyclic.
\end{theorem}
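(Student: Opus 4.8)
The plan is to reduce to positive scalars and then verify the measure-theoretic characterization of hypercyclicity from \cite[Theorem~1.1]{bayart2018topological}. First I would invoke \cite[Theorem~2.1]{bermudez2002} to replace $\mathbb{R}$-supercyclicity by $\mathbb{R}_+$-supercyclicity, so that there is a vector $x\in L^p(X;\mathbb{K})$ with $\{\lambda T_f^n x : \lambda>0,\ n\ge 1\}$ dense. The decisive structural fact, and the only place where $\mu(X)<\infty$ is really used, is that the constant function $\chi_X$ lies in $L^p(X;\mathbb{K})$ and is a fixed point, $T_f\chi_X=\chi_X$; moreover the averaging functional $\Lambda(\varphi)=\int_X\varphi\,d\mu$ is bounded on $L^p(X;\mathbb{K})$ with $\Lambda(\chi_X)=\mu(X)>0$. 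This supplies a canonical nonzero fixed direction against which the free scalar $\lambda$ can be measured.

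The characterization in \cite{bayart2018topological} reduces hypercyclicity of $T_f$ to producing, for every measurable $A$ and every $\varepsilon>0$, a subset $B\subseteq A$ with $\mu(A\setminus B)<\varepsilon$ and integers along a common sequence $(n_k)$ for which $T_f^{n_k}\chi_B$ is driven to $0$ and $\chi_B$ admits an approximate preimage of small norm; concretely, since $\|T_f^{n}\chi_B\|_p^p=\mu(f^{-n}(B))=\int_B w_n\,d\mu$ with $w_n=d(\mu\circ f^{-n})/d\mu$, this amounts to smallness of the relevant Radon--Nikodym densities on the sets involved. The plan is to extract such sets and times directly from the supercyclic data: approximating indicator-type targets by $\lambda_k T_f^{n_k}x$ furnishes candidate sets $B$ and times $n_k$ realizing both the forward smallness and the approximate-inverse smallness, while simultaneously approximating $\chi_X$ by the \emph{same} orbit and applying $\Lambda$ forces $\lambda_k\,\Lambda(T_f^{n_k}x)\to\mu(X)$, which pins the normalization. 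Absorbing the resulting scalars then converts the scaled approximations into the genuine, scale-free conditions demanded by the characterization, yielding hypercyclicity.

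The main obstacle is controlling the scalars $\lambda_k$: \emph{a priori} they could degenerate to $0$ or blow up, in which case the supercyclic approximations would carry no hypercyclic content, and indeed without it supercyclicity need not imply hypercyclicity. Neutralizing $\lambda_k$ is precisely the role of the fixed vector $\chi_X$ and the bounded functional $\Lambda$, which should force $\lambda_k$ to stay bounded and bounded away from $0$. Once the scaling is tamed, I expect the remaining work to be bookkeeping: selecting a single sequence $(n_k)$ that simultaneously serves the forward and inverse parts of the characterization, and verifying the measure estimates on the approximating sets $B$. These steps are routine, so the crux of the argument is genuinely the scalar-elimination made possible by the finiteness of $\mu$.
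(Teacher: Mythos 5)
There is a genuine gap: the scalar-elimination mechanism you propose does not work, and the idea that actually closes the argument is missing. First, supercyclicity hands you, for each target, \emph{some} pair $(n,\lambda)$; it does not let you approximate an indicator-type target and $\chi_X$ by the same $\lambda_k T_f^{n_k}x$ at the same times, so the simultaneous approximation you rely on is not available. Second, even if you approximate $\chi_X$ separately, the functional $\Lambda(\varphi)=\int_X\varphi\,d\mu$ is not $T_f$-invariant: $\Lambda(T_f^{n}x)=\int_X x\,d(\mu\circ f^{-n})$, which is uncontrolled for a general non-singular $f$. So the relation $\lambda_k\,\Lambda(T_f^{n_k}x)\to\mu(X)$ pins $\lambda_k$ only against an uncontrolled quantity and does not keep the scalars bounded and bounded away from $0$. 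The fixed vector $\chi_X$ by itself carries no obstruction either (hypercyclic operators may well have eigenvalue $1$; indeed the paper's odometer operators have dense periodic points), so it cannot serve as the ``canonical direction'' you want.

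The paper's proof needs no control on the scalars at all; the engine is a positivity (sign) argument. By \cite[Theorem 2.1]{bermudez2002} one may take the scalars positive, and then one approximates the constant $+2$ at some time and the constant $-2$ at a later time along the same positive projective orbit: there are $\varphi$, $k\ge 1$ and $\lambda>0$ with $\|\varphi-2\|<\varepsilon/2$ and $\|\lambda\varphi\circ f^k+2\|<\varepsilon/2$. Since $\lambda>0$, the set $D=\{|\varphi-2|<1\}$ (where $\varphi>1$) and the set $C=\{|\lambda\varphi+2|<1\}$ (where $\varphi<0$) are disjoint, and Chebyshev estimates show both $X\setminus D$ and $X\setminus f^{-k}(C)$ are small. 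Setting $B=D\cap f^{-k}(C)$, disjointness gives $f^k(B)\subseteq C\subseteq X\setminus D$ and $f^{-k}(B)\subseteq f^{-k}(D)\subseteq X\setminus f^{-k}(C)$, so $\mu(X\setminus B)<\varepsilon$, $\mu(f^{-k}(B))<\varepsilon$ and $\mu^*(f^k(B))<\varepsilon$, which is exactly the hypothesis of \cite[Theorem 1.1]{bayart2018topological}. In short: the free positive scalar is neutralized because a positive multiple cannot move a function from near $+2$ to near $-2$ pointwise, not because the scalar can be bounded. Your proposal correctly identifies the obstacle but supplies a tool that cannot overcome it.
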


\begin{proof}
    By \cite[Theorem 2.1]{bermudez2002}, $T_f$ is also $\mathbb{R}_+$-supercyclic. Hence, there exist $k\geq 1$, $\lambda\in \mathbb{R}_+$ and $\varphi\in L^p(\mu)$ such that
    \[
    \|\varphi-2\|<\frac{\varepsilon}{2} \hspace{0.5cm} \text{and} \hspace{0.5cm} \|\lambda\varphi\circ f^k+2\|<\frac{\varepsilon}{2}.
    \]
    Since
    \begin{align*}
        \|\varphi-2\|^p&=\int_{\{x\in X:|\varphi(x)-2|< 1\}} |\varphi-2|^p d\mu+\int_{\{x\in X:|\varphi(x)-2|\geq 1\}} |\varphi-2|^p d\mu\\
        &\geq \mu(\{x\in X:|\varphi(x)-2|\geq 1\}),
    \end{align*}
    we have
    \begin{equation}\label{eq: measure of sets}
        \mu(\{x\in X:|\varphi(x)-2|\geq 1\})<\frac{\varepsilon}{2}.
    \end{equation}

    Analogously,
    \begin{equation}\label{eq: measure of sets 2}
        \mu(\{x\in X:|\lambda\varphi\circ f^k(x)+2|\geq 1\})<\frac{\varepsilon}{2}.
    \end{equation}

    Now let 
    \[
    C=\{x\in X: |\lambda\varphi(x)+2|<1\} \hspace{0.5cm} \text{and} \hspace{0.5cm} D=\{x\in X: |\varphi(x)-2|<1\},
    \]
    and define
$ B=D\cap f^{-k}(C)$.

    First, note that 
    \[
    X\setminus D\subseteq \{x\in X: |\varphi(x)-2|\geq 1\}
    \] and 
    \[
    X\setminus f^{-k}(C)\subseteq \{x\in X: |\lambda\varphi( f^k(x))+2|\geq 1\},
    \]
    so \eqref{eq: measure of sets} and \eqref{eq: measure of sets 2} imply that $\mu(X\setminus B)<\varepsilon$.

    Next, since $\lambda \in \mathbb{R}_+$,
    \[ 
    f^{-k}(D)\subseteq \{x\in X: |\lambda\varphi (f^k(x))+2|\geq 1\}, 
    \]
    and
    \[
    C\subseteq \{x\in X: |\varphi(x)-2|\geq 1\}.
    \]
    Hence, by \eqref{eq: measure of sets 2}, $\mu(f^{-k}(B))\leq \mu (f^{-k}(D))<\varepsilon$ and since $f^k(B)\subseteq C$, by \eqref{eq: measure of sets} we also have that $\mu^*(f^k(B))<\varepsilon$, where $\mu^*$ denotes the outer measure associated with $\mu$. The result now follows from \cite[Theorem~1.1]{bayart2018topological}.
\end{proof}

\begin{rmk}
    When $f$ is bijective, bi-measurable and $T_f^{-1}$ is continuous, $\mathbb{R}$-supercyclic composition operators are characterized in \cite[Theorem~3.1]{D’Aniello2025supercyclic}, without the assumption that the measure $\mu$ is finite.
\end{rmk}

Next we study the relationship between the complex-valued composition operator $T_f:L^p(X;\mathbb{C})\to L^p(X;\mathbb{C})$ induced by $f$ and  the real-valued composition operator $\widetilde{T}_f:L^p(X;\mathbb{R})\to L^p(X;\mathbb{R})$ induced by $f$.

\begin{proposition}
     Suppose that $T_f:L^p(X;\mathbb{C})\to L^p(X;\mathbb{C})$ is supercyclic with  $\varphi$ being a supercyclic vector for $T_f$ and $\varphi=\varphi_1+i\varphi_2$, $\varphi_1,\varphi_2\in L^p(X;\mathbb{R})$. For every $\varepsilon>0$ and every $\psi\in L^p(X;\mathbb{R})$, there exist $\lambda\in \mathbb{R}$ and $k\in \mathbb{N}$ such that
    \[
    \|\lambda\varphi_1 \circ f^k-\psi\|<\varepsilon \hspace{0.5cm} \text{or} \hspace{0.5cm} \|\lambda\varphi_2 \circ f^k-\psi\|<\varepsilon.
    \]
\end{proposition}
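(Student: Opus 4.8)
The plan is to exploit the $\mathbb{C}$-supercyclicity of $\varphi$ to approximate $\psi$, regarded as an element of $L^p(X;\mathbb{C})$ with vanishing imaginary part, and then to read off information from the real and imaginary parts separately. Fix $\varepsilon>0$ and $\psi\in L^p(X;\mathbb{R})$, and set $\delta=\varepsilon/2$. Since $\{\lambda\,\varphi\circ f^k:\lambda\in\mathbb{C},\,k\geq 1\}$ is dense in $L^p(X;\mathbb{C})$, there exist $\lambda=a+ib$ (with $a,b\in\mathbb{R}$) and $k\in\mathbb{N}$ with $\|\lambda\,\varphi\circ f^k-\psi\|<\delta$. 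Writing $u=\varphi_1\circ f^k$ and $v=\varphi_2\circ f^k$, one has $\lambda\,\varphi\circ f^k=(au-bv)+i(bu+av)$. Because $|z-\psi|\ge|\operatorname{Re}(z)-\psi|$ and $|z-\psi|\ge|\operatorname{Im}(z)|$ pointwise whenever $\psi$ is real, the complex $L^p$-norm dominates the norms of the real and imaginary parts, so
\[
\|au-bv-\psi\|<\delta\qquad\text{and}\qquad\|bu+av\|<\delta.
\]

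If $a=b=0$, the first estimate reads $\|\psi\|<\delta<\varepsilon$, and the conclusion holds with $\lambda=0$. Otherwise I would split into two cases according to the larger of $|a|,|b|$. Suppose first $|a|\ge|b|$ (so $a\neq0$). The key is the algebraic identity
\[
\frac{a^2+b^2}{a}\,u-\psi=(au-bv-\psi)+\frac{b}{a}\,(bu+av),
\]
which, combined with $|b|/|a|\le 1$ and the two displayed estimates, yields $\big\|\tfrac{a^2+b^2}{a}\,u-\psi\big\|<\delta+\tfrac{|b|}{|a|}\delta\le 2\delta=\varepsilon$. Thus the first alternative holds with the real scalar $(a^2+b^2)/a$ and $u=\varphi_1\circ f^k$. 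Symmetrically, if $|b|>|a|$ (so $b\neq0$), the identity
\[
-\frac{a^2+b^2}{b}\,v-\psi=(au-bv-\psi)-\frac{a}{b}\,(bu+av),
\]
together with $|a|/|b|<1$, gives $\big\|-\tfrac{a^2+b^2}{b}\,v-\psi\big\|<2\delta=\varepsilon$, so the second alternative holds with $-(a^2+b^2)/b$ and $v=\varphi_2\circ f^k$.

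The only genuine subtlety is the uniform control of the cross term $bu+av$: the coefficient multiplying it in each identity must be bounded independently of the a priori uncontrolled approximating data $a,b,u,v$. This is exactly what the dichotomy $|a|\ge|b|$ versus $|b|>|a|$ delivers, since it forces the relevant ratio $|b|/|a|$ or $|a|/|b|$ to be at most $1$, and it is also what produces the disjunction in the statement — the first branch feeds $\varphi_1$, the second $\varphi_2$. The remaining ingredients, namely that the complex $L^p$-norm dominates the $L^p$-norms of the real and imaginary parts and that the two one-line identities hold, are routine, and the choice $\delta=\varepsilon/2$ absorbs the resulting factor of $2$.
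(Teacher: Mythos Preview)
Your proof is correct and follows essentially the same approach as the paper's: approximate $\psi$ in $L^p(X;\mathbb{C})$, separate real and imaginary parts, and use the dichotomy $|a|\ge|b|$ versus $|b|>|a|$ to eliminate one of $\varphi_1\circ f^k,\varphi_2\circ f^k$ via a bounded-ratio substitution. Your algebraic identities are slightly more streamlined than the paper's computation (allowing $\varepsilon/2$ in place of $\varepsilon/3$), but the argument is the same.
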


\begin{proof}
    Let $\varepsilon>0$ and $\psi\in L^p(X;\mathbb{R})$ with $\|\psi\|\neq 0$. In particular we have that $\psi\in L^p(X;\mathbb{C})$. Let $\lambda=\lambda_1+i\lambda_2$, $\lambda_1,\lambda_2\in \mathbb{R}$, and $k\in \mathbb{N}$ be such that
    \[
    \|\lambda \varphi \circ f^k-\psi \|<\frac{\varepsilon}{3}.
    \]
    By modifying $\lambda$ slightly, we may assume that neither $\lambda_1$ nor $\lambda_2$ is zero. 
    Note that 
    \begin{equation}\label{eq: sc vector}
    \begin{aligned}
        \lambda \varphi \circ f^k &= (\lambda_1+i\lambda_2)(\varphi_1 \circ f^k+i\varphi_2 \circ f^k)\\
        &=(\lambda_1\varphi_1 \circ f^k-\lambda_2\varphi_2 \circ f^k)+i(\lambda_2\varphi_1 \circ f^k+\lambda_1\varphi_2 \circ f^k).
    \end{aligned}
    \end{equation}
    
    Let us first consider the case that $|\lambda_1|\leq |\lambda_2|$. Take
    \[
    \eta=\lambda_2\varphi_1 \circ f^k+\lambda_1 \varphi_2 \circ f^k \in L^p(X;\mathbb{R}).
    \]
    By (\ref{eq: sc vector}) we have that $\|\eta\|<\varepsilon/3$. 

    Hence we have that
    \begin{align*}
        \frac{\varepsilon}{3}&>\| \lambda \varphi \circ f^k-\psi\|
        =\Big\|\lambda_1\Big(\frac{\eta-\lambda_1\varphi_2 \circ f^k}{\lambda_2}\Big)-\lambda_2\varphi_2\circ f^k-\psi+i\eta\Big\|\\
        &\geq \Big\|\Big(\frac{-\lambda_1^2}{\lambda_2}-\lambda_2\Big)\varphi_2 \circ f^k-\psi\Big\|-\Big|\frac{\lambda_1}{\lambda_2}\Big|\|\eta\|-\|\eta\|.
    \end{align*}
    Thus
    \[
    \Big\|\Big(\frac{-\lambda_1^2}{\lambda_2}-\lambda_2\Big)\varphi_2 \circ f^k-\psi\Big\|<\varepsilon.
    \]
    Therefore we have approximated $\psi$ by a real multiple of $\varphi_2 \circ f^k$. The case where $|\lambda_2|\leq |\lambda_1|$ is analogous by approximating $\psi$ by a real multiple of $\varphi_1\circ f^k$.
\end{proof}

\begin{corollary}\label{cor: supercyclic}
    Suppose that $T_f:L^p(X;\mathbb{C})\to L^p(X;\mathbb{C})$ is supercyclic with $\varphi=\varphi_1+i\varphi_2$, $\varphi_1,\varphi_2\in L^p(X;\mathbb{R})$, being a supercyclic vector for $T_f$. For all $\psi \in L^p(X;\mathbb{R})$ there exist $j\in \{1,2\}$ and sequences $\lambda_n\in \mathbb{R}$ and $k_n\in \mathbb{N}$, $n\in \mathbb{N}$, such that
    \[
    \|\lambda_n\varphi_j \circ f^{k_n}-\psi\|\to 0
    \]
    as $n\to \infty$.
\end{corollary}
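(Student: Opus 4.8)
The plan is to upgrade the dichotomy of the preceding Proposition into a single, fixed index $j$ by means of a pigeonhole argument over the two possible choices, followed by passage to a subsequence. The key observation is that the only gap between the Proposition and the Corollary is the logical ``or'': for each prescribed accuracy $\varepsilon$ the Proposition approximates $\psi$ either by a real multiple of $\varphi_1 \circ f^k$ or by a real multiple of $\varphi_2 \circ f^k$, and a priori which alternative holds could vary with $\varepsilon$. Since there are only two alternatives, this oscillation can be removed.

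First I would fix $\psi \in L^p(X;\mathbb{R})$. The case $\psi = 0$ is trivial, handled by taking $\lambda_n = 0$ and any $k_n$, so I may assume $\psi \neq 0$. For each $n \in \mathbb{N}$, I apply the Proposition with $\varepsilon = 1/n$ to obtain $\lambda_n \in \mathbb{R}$, $k_n \in \mathbb{N}$, and an index $j_n \in \{1,2\}$ such that
\[
\|\lambda_n \varphi_{j_n} \circ f^{k_n} - \psi\| < \frac{1}{n}.
\]
Because $j_n$ ranges over the two-element set $\{1,2\}$, the pigeonhole principle yields a single $j \in \{1,2\}$ with $j_n = j$ for infinitely many $n$. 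Restricting to that subsequence and relabeling, I obtain sequences $\lambda_n \in \mathbb{R}$ and $k_n \in \mathbb{N}$ for which $\|\lambda_n \varphi_j \circ f^{k_n} - \psi\| \to 0$, which is exactly the claim. Note that $j$ may depend on $\psi$, consistent with the order of quantifiers in the statement.

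There is no genuine obstacle here, as all of the analytic content already resides in the Proposition; the Corollary is a routine reformulation. The only step requiring any care is the extraction of the constant index $j$, and the pigeonhole argument is precisely what guarantees that a single one of the two functions $\varphi_1, \varphi_2$ suffices for arbitrarily good approximation of $\psi$ along a common sequence.
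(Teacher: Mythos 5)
Your proposal is correct and matches the intended derivation: the paper states the Corollary without proof precisely because it follows from the preceding Proposition by taking $\varepsilon = 1/n$ and applying the pigeonhole principle to the two-element set of indices, exactly as you do. Your separate handling of $\psi = 0$ (which the Proposition's proof tacitly excludes) is a minor but welcome bit of care.
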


\begin{proposition}\label{prop: sc imples R-sc}
    Suppose that $T_f:L^p(X;\mathbb{C})\to L^p(X;\mathbb{C})$ is supercyclic. Then $\widetilde{T}_f:L^p(X;\mathbb{R}) \to L^p(X;\mathbb{R})$ is $\mathbb{R}$-supercyclic.
\end{proposition}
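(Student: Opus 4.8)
The plan is to use \Cref{cor: supercyclic} as the engine, combine it with a Baire category dichotomy, and then upgrade a ``somewhere dense'' scalar orbit to a dense one. Fix a complex supercyclic vector $\varphi=\varphi_1+i\varphi_2$ for $T_f$ (the degenerate cases $\varphi_1=0$ or $\varphi_2=0$ are handled directly, since then $\varphi$ is a real or purely imaginary vector and the density of its complex scalar orbit already exhibits $\mathbb{R}$-supercyclicity after factoring out the unimodular scalars). On the real space $L^p(X;\mathbb{R})$ define, for $j\in\{1,2\}$,
\[
E_j=\overline{\{\lambda\,\varphi_j\circ f^k:\lambda\in\mathbb{R},\ k\geq 1\}}.
\]
Each $E_j$ is a closed cone, i.e.\ invariant under multiplication by arbitrary real scalars, and it is $\widetilde{T}_f$-invariant because $\widetilde{T}_f(\lambda\,\varphi_j\circ f^k)=\lambda\,\varphi_j\circ f^{k+1}$ again lies in the generating set. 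The precise content of \Cref{cor: supercyclic} is that every $\psi\in L^p(X;\mathbb{R})$ lies in $E_1$ or in $E_2$; that is, $L^p(X;\mathbb{R})=E_1\cup E_2$.

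First I would apply the Baire category theorem. Since $L^p(X;\mathbb{R})$ is a complete metric space and is the union of the two closed sets $E_1$ and $E_2$, at least one of them, say $E_1$, has nonempty interior. The key observation is that the statement $E_1=L^p(X;\mathbb{R})$ is exactly the assertion that $\varphi_1$ is an $\mathbb{R}$-supercyclic vector for $\widetilde{T}_f$, which is what we want; if instead $E_2$ is the set with nonempty interior, the same argument applies verbatim to $\varphi_2$. Thus the problem reduces to showing that a closed $\widetilde{T}_f$-invariant cone of the form $E_1$ that has nonempty interior must be the whole space.

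The main obstacle is precisely this last upgrade. Since $E_1$ is the closure of the real scalar orbit $\{\lambda\,\widetilde{T}_f^{\,k}\varphi_1:\lambda\in\mathbb{R},\ k\geq 1\}$, the assertion is a somewhere-dense-orbit phenomenon for supercyclicity: a scalar orbit whose closure has nonempty interior is in fact everywhere dense. I would resolve it by invoking the somewhere-dense-orbit theorem for supercyclic operators, in its $\mathbb{R}$-supercyclic form, which is legitimate in view of the equivalence of $\mathbb{R}$- and $\mathbb{R}_+$-supercyclicity established in \cite{bermudez2002}; note that a naive cone argument does not suffice, since a closed cone with nonempty interior need not be all of the space, so the dynamics of $\widetilde{T}_f$ (through the invariance of $E_1$) are genuinely needed. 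Once $\varphi_1$ (or $\varphi_2$) is shown to be $\mathbb{R}$-supercyclic, we conclude that $\widetilde{T}_f$ is $\mathbb{R}$-supercyclic, completing the proof and setting up the reduction of the complex case of Theorem~\ref{thm: sc implies hc} to the already-proved real case in \Cref{thm: r-sc implies hc}.
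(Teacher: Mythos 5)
Your proof follows the same route as the paper's: write $L^p(X;\mathbb{R})$ as the union of the two closed scalar-orbit closures $E_1\cup E_2$ via \Cref{cor: supercyclic}, apply the Baire category theorem to find one with nonempty interior, and upgrade that somewhere-dense scalar orbit to a dense one. The paper's own proof stops at ``$L_{j_0}$ contains an open ball'' and simply asserts the conclusion, so your explicit appeal to the somewhere-dense-scalar-orbit (Bourdon--Feldman type) theorem supplies precisely the justification the paper leaves implicit, and your remark that a bare closed-cone argument would not suffice is correct; the side treatment of the degenerate cases $\varphi_j=0$ is harmless but unnecessary, since then $E_j=\{0\}$ has empty interior and the dichotomy already selects the other index.
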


\begin{proof}
    Let $\varphi=\varphi_1+i\varphi_2$, $\varphi_1,\varphi_2\in L^p(X;\mathbb{R})$, be a supercyclic vector for $T_f$. For $j\in \{1,2\}$, let 
    \[
    \begin{aligned}
        L_j=\big\{\psi\in L^p(X;\mathbb{R}):& \text{ there exist } (\lambda_n)_{n=1}^\infty \in \mathbb{R}^\mathbb{N}, (k_n)_{n=1}^\infty \in \mathbb{N}^\mathbb{N}\\ &\text{ such that } \|\lambda_n\varphi_j\circ f^{k_n}-\psi\|\to 0\big\}.
    \end{aligned}
    \]
    Then each $L_j$ is closed in $L^p(X;\mathbb{R})$ and, by Corollary \ref{cor: supercyclic}, $L^p(X;\mathbb{R})=L_1\cup L_2$. Hence, there is $j_0\in \{1,2\}$ such that $L_{j_0}$ contains an open ball. Thus
  $L^p(X;\mathbb{R})=\overline{\{\lambda\varphi_{j_0}\circ f^k:\lambda\in \mathbb{R},k\in \mathbb{N}\}}.$
\end{proof}

We say that an operator $T:\mathcal{X}\to \mathcal{X}$ is \textit{weakly mixing} if $T\times T$ is hypercyclic. For composition operators on $L^p$, they coincide with the hypercyclic ones (see \cite[Theorem~2.11]{gomes2024}).

\begin{proposition}\label{prop: R-wm implies C-hc}
    If $\widetilde{T}_f:L^p(X;\mathbb{R})\to L^p(X;\mathbb{R})$ is weakly mixing, then $T_f:L^p(X;\mathbb{C})\to L^p(X;\mathbb{C})$ hypercyclic.
\end{proposition}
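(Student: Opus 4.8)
The plan is to realize $T_f$ on the complex space as the product $\widetilde{T}_f \times \widetilde{T}_f$ acting on the real space, and then read off complex hypercyclicity directly from the definition of weak mixing.

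First I would set up the identification. Writing each $\varphi \in L^p(X;\mathbb{C})$ as $\varphi = \varphi_1 + i\varphi_2$ with $\varphi_1,\varphi_2 \in L^p(X;\mathbb{R})$ gives a real-linear bijection $J : L^p(X;\mathbb{C}) \to L^p(X;\mathbb{R}) \times L^p(X;\mathbb{R})$, $J(\varphi) = (\varphi_1,\varphi_2)$. The only analytic point to check is that $J$ is a homeomorphism. Since $|\varphi| = (\varphi_1^2 + \varphi_2^2)^{1/2}$ pointwise, one has $\|\varphi_j\| \le \|\varphi\|$ for $j \in \{1,2\}$ and $\|\varphi\| \le \|\varphi_1\| + \|\varphi_2\|$, so the complex $L^p$-norm and the product norm are equivalent and both $J$ and $J^{-1}$ are continuous. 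In particular, a subset of $L^p(X;\mathbb{C})$ is dense if and only if its image under $J$ is dense in the product space.

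Next I would note that $J$ intertwines the two dynamics. Since $T_f(\varphi) = \varphi \circ f = (\varphi_1 \circ f) + i(\varphi_2 \circ f)$, applying $J$ gives $J(T_f \varphi) = (\widetilde{T}_f \varphi_1, \widetilde{T}_f \varphi_2) = (\widetilde{T}_f \times \widetilde{T}_f)(J\varphi)$; that is, $J T_f = (\widetilde{T}_f \times \widetilde{T}_f) J$. Iterating, $J(T_f^n \varphi) = (\widetilde{T}_f \times \widetilde{T}_f)^n(J\varphi)$ for every $n \ge 1$, so $J$ carries the $T_f$-orbit of $\varphi$ onto the $(\widetilde{T}_f \times \widetilde{T}_f)$-orbit of $J\varphi$. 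Combining this with the previous paragraph, $\varphi$ is hypercyclic for $T_f$ if and only if $J\varphi$ is hypercyclic for $\widetilde{T}_f \times \widetilde{T}_f$.

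Finally I would invoke the hypothesis. By assumption $\widetilde{T}_f$ is weakly mixing, which by definition means $\widetilde{T}_f \times \widetilde{T}_f$ is hypercyclic on $L^p(X;\mathbb{R}) \times L^p(X;\mathbb{R})$; let $(\psi_1,\psi_2)$ be a hypercyclic vector for it. Setting $\varphi = \psi_1 + i\psi_2$, so that $J\varphi = (\psi_1,\psi_2)$, the equivalence just established shows that $\varphi$ is a hypercyclic vector for $T_f$, and hence $T_f$ is hypercyclic. There is essentially no hard step here: the argument is the standard identification of a complex Banach space with the square of its real part, under which a $\mathbb{C}$-linear operator induced by a real map becomes the self-product of its real restriction. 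The only item needing a line of justification is the norm equivalence making $J$ a homeomorphism, which is routine.
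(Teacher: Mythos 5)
Your proposal is correct and follows essentially the same route as the paper: the paper also takes a hypercyclic vector $(\varphi_1,\varphi_2)$ for $\widetilde{T}_f\times\widetilde{T}_f$ and verifies directly, via the estimate $\|T_f^k\varphi-\psi\|\le\|\varphi_1\circ f^k-\psi_1\|+\|\varphi_2\circ f^k-\psi_2\|$, that $\varphi_1+i\varphi_2$ is hypercyclic for $T_f$. Your packaging of this as a topological conjugacy $J$ intertwining $T_f$ with $\widetilde{T}_f\times\widetilde{T}_f$ is just a cleaner phrasing of the same argument.
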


\begin{proof}
    Let $\varphi=(\varphi_1,\varphi_2)\in L^p(X;\mathbb{R})\times L^p(X;\mathbb{R})$, be a hypercyclic vector for $\widetilde{T}_f\times \widetilde{T}_f$. We claim that $\varphi=\varphi_1+i\varphi_2 \in L^p(X;\mathbb{C})$ is a hypercyclic vector for $T_f$. To this end, let $\psi\in L^p(X;\mathbb{C})$ and $\varepsilon>0$. We write $\psi=\psi_1+i\psi_2$, $\psi_1,\psi_2\in L^p(X;\mathbb{R})$. Since $(\varphi_1,\varphi_2)$ is hypercyclic for $\widetilde{T}_f\times \widetilde{T}_f$, there exists $k\geq 1$ such that
    \[
    \|(\widetilde{T}_f^k\varphi_1,\widetilde{T}^k_f\varphi_2)-(\psi_1,\psi_2)\|<\frac{\varepsilon}{2}.
    \]
    Hence we have
    \[
    \|\varphi_1\circ f^k-\psi_1\|<\frac{\varepsilon}{2} \hspace{0.5cm} \text{and} \hspace{0.5cm}\|\varphi_2\circ f^k-\psi_2\|<\frac{\varepsilon}{2}.
    \]
    Thus
    \begin{align*}
        \|T_f^k \varphi-\psi\|&= \|(\varphi_1+i\varphi_2)\circ f^k-\psi_1+i\psi_2||\\
        &\leq \|\varphi_1\circ f^k-\psi_1\|+\|\varphi_2\circ f^k-\psi_2\| <\varepsilon,\\
    \end{align*}
    which concludes the proof of the claim.
\end{proof}

\begin{theorem}
    Suppose that $\mu(X)<\infty$ and $T_f:L^p(X;\mathbb{C})\to L^p(X;\mathbb{C})$ is supercyclic. Then $T_f$ is hypercyclic.
\end{theorem}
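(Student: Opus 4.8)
The plan is to chain together the three preparatory results just established, shuttling between the real and complex composition operators and exploiting the equivalence of weak mixing and hypercyclicity for composition operators. First I would observe that since $T_f:L^p(X;\mathbb{C})\to L^p(X;\mathbb{C})$ is supercyclic, Proposition \ref{prop: sc imples R-sc} immediately yields that the associated real operator $\widetilde{T}_f:L^p(X;\mathbb{R})\to L^p(X;\mathbb{R})$ is $\mathbb{R}$-supercyclic.

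Next I would invoke Theorem \ref{thm: r-sc implies hc} in the real case $\mathbb{K}=\mathbb{R}$: since $\mu(X)<\infty$ and $\widetilde{T}_f$ is $\mathbb{R}$-supercyclic, this gives that $\widetilde{T}_f$ is hypercyclic. The remaining task is then to transport hypercyclicity of the real operator back to the complex operator.

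To carry out this transfer I would use the fact that for composition operators on $L^p$ hypercyclicity coincides with weak mixing (see \cite[Theorem~2.11]{gomes2024}). Thus $\widetilde{T}_f$ being hypercyclic is equivalent to $\widetilde{T}_f$ being weakly mixing, and Proposition \ref{prop: R-wm implies C-hc} then delivers that $T_f:L^p(X;\mathbb{C})\to L^p(X;\mathbb{C})$ is hypercyclic, which is exactly the desired conclusion.

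Since the substantive work has already been done in the preceding results, no genuine computational obstacle remains; the one point I would flag is the necessity of the weak-mixing detour. One cannot pass directly from hypercyclicity of $\widetilde{T}_f$ to hypercyclicity of $T_f$, because a single real hypercyclic vector need not produce a complex hypercyclic vector: Proposition \ref{prop: R-wm implies C-hc} requires a hypercyclic vector for the product $\widetilde{T}_f\times\widetilde{T}_f$, whose two real coordinates are taken as the real and imaginary parts of the candidate complex vector. The equivalence of weak mixing and hypercyclicity is precisely what furnishes such a product-hypercyclic vector, and this is the step most easily overlooked.
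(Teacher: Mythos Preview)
Your proposal is correct and follows essentially the same route as the paper's own proof: apply Proposition~\ref{prop: sc imples R-sc} to get $\mathbb{R}$-supercyclicity of $\widetilde{T}_f$, then Theorem~\ref{thm: r-sc implies hc} to get hypercyclicity of $\widetilde{T}_f$, then \cite[Theorem~2.11]{gomes2024} to upgrade to weak mixing, and finally Proposition~\ref{prop: R-wm implies C-hc} to transfer hypercyclicity back to the complex operator. The paper additionally mentions the passage from $\mathbb{R}$-supercyclic to $\mathbb{R}_+$-supercyclic via \cite[Theorem~2.1]{bermudez2002}, but since Theorem~\ref{thm: r-sc implies hc} already accepts $\mathbb{R}$-supercyclicity as hypothesis this is not an extra logical step, and your explanation of why the weak-mixing detour is needed is entirely on point.
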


\begin{proof}
    By Proposition \ref{prop: sc imples R-sc} we have that $\widetilde{T}_f:L^p(X;\mathbb{R})\to L^p(X;\mathbb{R})$ is $\mathbb{R}$-supercyclic, which is equivalent of saying that $\widetilde{T}_f$ is $\mathbb{R}_+$-supercyclic, by \cite[Theorem~2.1]{bermudez2002}. From Theorem \ref{thm: r-sc implies hc} we get that $\widetilde{T}_f$ is hypercyclic. By \cite[Theorem~2.11]{gomes2024} $\widetilde{T}_f$ is weakly mixing and, from Proposition \ref{prop: R-wm implies C-hc} we conclude that $T_f$ is hypercyclic.
\end{proof}

Altogether we get the following characterization, which proves Theorem \ref{thm: sc implies hc}.

\begin{corollary}
    Suppose that $\mu(X)<\infty$. The following are equivalent:
    \begin{enumerate}
        \item $T_f:L^p(X;\mathbb{C})\to L^p(X;\mathbb{C})$ is supercyclic;
        \item $\widetilde{T}_f:L^p(X;\mathbb{R})\to L^p(X;\mathbb{R})$ is $\mathbb{R}$-supercyclic;
        \item $\widetilde{T}_f:L^p(X;\mathbb{R})\to L^p(X;\mathbb{R})$ is $\mathbb{R}_+$-supercyclic;
        \item $\widetilde{T}_f:L^p(X;\mathbb{R})\to L^p(X;\mathbb{R})$ is hypercyclic;
        \item $\widetilde{T}_f:L^p(X;\mathbb{R})\to L^p(X;\mathbb{R})$ is weakly mixing;
        \item $T_f:L^p(X;\mathbb{C})\to L^p(X;\mathbb{C})$ is hypercyclic;
        \item $T_f:L^p(X;\mathbb{C})\to L^p(X;\mathbb{C})$ is weakly mixing.
    \end{enumerate}
\end{corollary}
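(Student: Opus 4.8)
The plan is purely to assemble the implications already established above into a single cycle, supplemented by the two cited black-box equivalences and a couple of trivial implications. Indeed, the preceding (unnamed) theorem has already run the chain from supercyclicity of the complex operator all the way to its hypercyclicity, so the corollary really amounts to closing that chain into a loop and recording the side pairings. First I would dispose of what comes for free: every hypercyclic vector is supercyclic, since the dense set $\{T_f^n\varphi : n\ge 1\}$ is contained in $\{\lambda T_f^n\varphi : \lambda\in\mathbb{C},\, n\ge 1\}$, and this gives $(6)\Rightarrow(1)$. The two cited facts then pair up the remaining conditions: \cite[Theorem~2.1]{bermudez2002} yields $(2)\Leftrightarrow(3)$, while \cite[Theorem~2.11]{gomes2024}, which identifies hypercyclicity with weak mixing for composition operators, yields both $(4)\Leftrightarrow(5)$ and $(6)\Leftrightarrow(7)$.

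Next I would thread the nontrivial cycle through the propositions already proved. \Cref{prop: sc imples R-sc} gives $(1)\Rightarrow(2)$; applying \Cref{thm: r-sc implies hc} with $\mathbb{K}=\mathbb{R}$ to the realification $\widetilde{T}_f$ gives $(2)\Rightarrow(4)$; \cite[Theorem~2.11]{gomes2024} gives $(4)\Rightarrow(5)$; \Cref{prop: R-wm implies C-hc} gives $(5)\Rightarrow(6)$; and the trivial implication closes the loop as $(6)\Rightarrow(1)$. Together with the side equivalences $(2)\Leftrightarrow(3)$ and $(6)\Leftrightarrow(7)$, the resulting loop $(1)\Rightarrow(2)\Rightarrow(4)\Rightarrow(5)\Rightarrow(6)\Rightarrow(1)$ forces all seven statements to be equivalent.

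Since every ingredient is already in hand, I do not expect a real obstacle here; the argument is essentially bookkeeping. If anything, the single load-bearing step is $(2)\Rightarrow(4)$, as it is the only place where the hypothesis $\mu(X)<\infty$ is used, entering through the hypercyclicity characterization of \cite[Theorem~1.1]{bayart2018topological} that underlies \Cref{thm: r-sc implies hc}. The genuinely delicate conceptual work — transferring supercyclicity between the complex operator and its realification via the dichotomy of \Cref{cor: supercyclic} — was carried out in the earlier propositions, so at this stage I would only need to invoke it.
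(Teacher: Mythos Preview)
Your proposal is correct and matches the paper's own proof almost verbatim: the same implications $(1)\Rightarrow(2)$ via \Cref{prop: sc imples R-sc}, $(2)\Rightarrow(4)$ via \Cref{thm: r-sc implies hc}, $(5)\Rightarrow(6)$ via \Cref{prop: R-wm implies C-hc}, the pairings $(2)\Leftrightarrow(3)$, $(4)\Leftrightarrow(5)$, $(6)\Leftrightarrow(7)$ from the cited black boxes, and a trivial closing implication. The only cosmetic difference is that the paper closes the loop with $(7)\Rightarrow(1)$ rather than $(6)\Rightarrow(1)$, which is immaterial.
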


\begin{proof}
    The implication $(1)\implies (2)$ follows from Proposition \ref{prop: sc imples R-sc}. The equivalence between (2) and (3) follows from \cite[Theorem 2.1]{bermudez2002}. By Theorem \ref{thm: r-sc implies hc} we have that (2) implies (4) and, by Proposition \ref{prop: R-wm implies C-hc}, we have that (5) implies (6). The equivalences $(4) \iff (5)$ and $(6) \iff (7)$ follow from \cite[Theorem~2.11]{gomes2024}. The implication $(7)\implies (1)$ is trivial.
\end{proof}

\section{Proof of Theorem B}\label{sec: periodicNotFHC chaos}
\begin{proof}[Proof of Theorem \ref{thm:periodicNotFHC}] Part (1) was proved in \cite[Example 4.3]{darji2021}. 

Let us prove (2). If $T_f: L^p(X) \rightarrow L^p(X)$ were to satisfy the Frequent Hypercyclicity Criterion for $p \ge 2$, by \cite[Theorem 3.1]{darji2021} we would have that $T_f$ satisfies Condition (SC) as stated in that article. As $\mu(X) < \infty$, we can apply \cite[Theorem 3.3]{darji2021} to obtain that $(X,\mu,f)$ is a dissipative system. However, this contradicts that $f$ is  conservative.   
\end{proof}

\section{Proof of Theorem C }\label{sec: ly chaos}

    \begin{proof}[Proof of Theorem \ref{thm: LY implies hc}] 
    
        As $T_f$ is Li-Yorke chaotic, $X$ is of finite measure and $f$ is bijective, by \cite[Theorem~1.5]{bernardes2020li} there exists a measurable set $B$ with $\mu(B)>0$ and an increasing sequence $n_k\to +\infty$ such that 
        \[
        \mu(f^{n_k}(B))\to 0.
        \]

        We will use \cite[Theorem~1.1]{bayart2018topological} to show that $T_f$ is hypercyclic. In particular, for every $\varepsilon >0$, we will construct a measurable set $A \subseteq X$ such that $\mu (A) > 1 -\varepsilon$ and $j \in {\mathbb{N}}$ such that $\mu (f^j(A)) < \varepsilon$. To this end, let $\varepsilon>0$. As $X$ has a basis of clopen sets and $\mu$ is a Borel probability measure on $X$, it follows that the Lebesgue density theorem holds, i.e., almost every point of $B$ has density one. Using this fact and $\prod_{n=1}^\infty \mu_n(0)=0$, we may choose $N>0$ so that 
        \[
        \prod_{n=1}^N\mu_n(0)<\frac{\varepsilon}{2}
        \]
        and $b_1, \ldots, b_N$ be such that
        \[
        \frac{\mu([b_1,\ldots,b_N]\cap B)}{\mu([b_1,\ldots,b_N])}>1-\frac{\varepsilon}{2}.
        \]
        Note that $[b_1,\ldots,b_N]\cap B = \{b_1\}\times \ldots \times \{b_N\} \times M $ where 
        $M \subseteq \prod_{n=N+1}^{\infty}\mathbb{Z}_{\alpha_n}$ with 
       $\mu_{[N+1,\infty)}(M)> 1- \frac{\varepsilon}{2}$. Here 
        $\mu_{[N+1,\infty)}$ denotes the product measure $\prod_{n=N+1}^{\infty}\mu_n$.

        Let \[
        \delta=\frac{\varepsilon}{2}\cdot \min\Big\{\mu_1(d_1)\cdots\mu_N(d_n):(d_1,\ldots ,d_N)\in \prod_{n=1}^N\mathbb{Z}_{\alpha_n}\Big\}.
        \] For positive integers $k$ and $m$, we let $k(m)$ denote the $m$-th coordinate of $k$ in its representation in terms of $\{\alpha_n\}_{n=1}^{\infty}$, that is
        \[
        k=\sum_{m=1}^\infty k(m)\beta_m,
        \]
        with $\beta_m=\prod_{n=1}^m \alpha_n$. Let $k >0$ be such that  $\mu(f^k(B))<\delta$
        and there exists $m >N$ such that $ k(m) >0$.
        Since $\{b_1\}\times \ldots \times \{b_N\} \times M\subseteq B$, we also have that $\mu(f^k(\{b_1\}\times \ldots \times \{b_N\} \times M))<\delta$.

        We now write
        $k=k_1+k_2$,
        where $k_1$ and $k_2$ are such that 
        \begin{align*}
            k_1(m)&=0, \hspace{0.5cm} \forall \; m\geq N+1\\
            k_2(m)&=0, \hspace{0.5cm} \forall \; 1\leq m\leq N.
        \end{align*}
        Note that $f^{k_1}((b_1,\ldots,b_N,0,0,\ldots))$ is the pointwise addition of 
        \[
        (k(1),\ldots ,k(N), 0,0,\ldots) \quad\text{and} \quad (b_1, \ldots ,b_N, 0,0,\ldots).
        \]
        Let $a_1,\ldots,a_N$ be the first $N$ coordinates of this sum and let $c\in \{0,1\}$ be the carry over from $N$ to $N+1$ position in this sum. Note that 
        \[ 
        f^{k_1}(\{b_1\}\times \ldots \times \{b_N\} \times M) = f^{c\cdot \prod_{n=1}^N\alpha_n}(\{a_1\}\times \ldots \times \{a_N\} \times M).
        \] 
        We now have
        \begin{align*}
            f^k(\{b_1\}\times \ldots \times \{b_N\} \times M)&=f^{k_1+k_2}(\{b_1\}\times \ldots \times \{b_N\} \times M)\\
            &=f^{k_2+c\cdot \prod_{n=1}^N\alpha_n}(\{a_1\}\times \ldots \times \{a_N\} \times M).
        \end{align*}
        By the definition of $k_2$, there exists $M'\subseteq \prod_{n=N+1}^\infty \mathbb{Z}_{\alpha_n}$ such that
        \[
        f^{k_2+c\cdot \prod_{n=1}^N\alpha_n}(\{a_1\}\times \ldots \times \{a_N\} \times M) = \{a_1\}\times \ldots \times \{a_N\} \times M'.
        \]
        As $\mu(f^k(\{b_1\}\times \ldots \times \{b_N\} \times M))<\delta$, it follows that $\mu_{[N+1,\infty)}(M')<\varepsilon/2$.

        We split into two cases:

        \medskip
        \noindent\textbf{Case 1}: c=0. Let
        \[
        A=\prod_{n=1}^N \mathbb{Z}_{\alpha_n} \times M
        \]
        and $j=k_2$. 
        We have that $\mu(A)>1-\varepsilon/2$ and 
        \begin{align*}
            f^j(A) &= \bigcup_{(d_1,\ldots,d_N)\in \prod_{n=1}^N\mathbb{Z}_{\alpha_n}}  f^{k_2}(\{d_1\}\times\ldots \times \{d_N\} \times M)\\ 
            &= \bigcup_{(d_1,\ldots,d_N)\in \prod_{n=1}^N\mathbb{Z}_{\alpha_n}}  \{d_1\}\times\ldots \times \{d_N\} \times M',
        \end{align*} 
        implying that 
        \[
        \mu(f^j(A))=\sum_{(d_1,\ldots,d_N)\in \prod_{n=1}^N\mathbb{Z}_{\alpha_n}} \mu_1(d_1)\cdots \mu_N(d_N) \cdot \mu_{[N+1,\infty)} (M') \leq \frac{\varepsilon}{2}.
        \]

        \noindent\textbf{Case 2}: c=1. Let $j\in \mathbb{N}$ be such that, in its representation in terms of $\alpha_n$,
        \[
        j(m)=\begin{dcases}
            \alpha_m-1 & 1\leq m \leq N,\\
            k_2(m) & m\geq N+1.
        \end{dcases}
        \]
        and 
        \[
        A=\prod_{n=1}^N \mathbb{Z}_{\alpha_n}\setminus\{(0,\ldots,0)\}\times M.
        \]
        We have that 
        \[
        \mu(A)=(1-\mu_1(0)\ldots\mu_N(0))\cdot\mu_{[N+1,\infty)}(M)>\Big(1-\frac{\varepsilon}{2}\Big)\cdot \Big(1-\frac{\varepsilon}{2}\Big)>1-\varepsilon.
        \]
        Since 
        \begin{align*}
            f^j(A)&=\bigcup_{(d_1,\ldots,d_N)\neq (0,\ldots,0)} f^j(\{d_1\}\times\ldots \times \{d_N\} \times M)\\
            &=\bigcup_{(d_1,\ldots,d_N)\neq (\alpha_1-1,\ldots,\alpha_N-1)} \{d_1\}\times\ldots \times \{d_N\} \times M',
        \end{align*}
        we also have that
        \[
        \mu(f^j(A))=\sum_{(d_1,\ldots,d_N)\neq (\alpha_1-1,\ldots,\alpha_N-1)} \mu_1(d_1)\cdots \mu_N(d_N) \cdot \mu_{[N+1,\infty)}(M')\leq \frac{\varepsilon}{2}.
        \]

        For every $\varepsilon >0$, we have constructed set $A \subseteq X$ such that $\mu (A) > 1 -\varepsilon$ and $j \in {\mathbb{N}}$ such that $\mu (f^j(A)) < \varepsilon$. Hence, $T_f$ is hypercyclic by \cite[Theorem~1.1]{bayart2018topological}.
    \end{proof}

\section{Proof of Theorem D}\label{sec: dist chaos}
    For every $n\geq 1$, set $\alpha_n =2$ and $A_n=\{0,1\}$, and consider the odometer $f$ acting on the space $X = \prod_{n=1}^{\infty} A_n$. We endow $X$ with the product measure $\mu=\prod_{n=1}^{\infty}\mu_n$, where the measures $\mu_n$ are defined as follows. Let $m_k = 4^{k}$, $k\ge 1$. 
    For each $n\in \mathbb{N}$, define 
    \[
        p_n=
        \begin{dcases}
            \frac{1}{2^n}, &  \text{ if } n \notin \{m_1, m_2,\ldots\} \,\\
            \frac{1}{2^k}, & \text{ if } n= m_k,
        \end{dcases}
    \]
    and let $\mu_n(1)=p_n$ and $\mu_n(0)=1-p_n$.It is straightforward to verify that the family $\{\mu_n\}$ satisfies Condition (\ref{odoCont}), and therefore $T_f$ is a bounded linear operator. 

    By \cite[Theorem~3.1]{bongiorno2022linear}, the operator $T_f$ is mixing. Moreover, by Theorem~\ref{thm:periodicNotFHC} part (1), $T_f$ is chaotic, while by Theorem~\ref{thm:periodicNotFHC} part (2), it fails to satisfy the Frequent Hypercyclicity Criterion for $p\geq 2$.

    It remains to show that $T_f$ is distributionally chaotic, which we accomplish by applying \cite[Theorem~32]{Bernardes2026}. Specifically, we construct a set $I \subseteq \mathbb{N}$ with upper density one, measurable sets $B_k \subseteq X$, $k\geq 1$, such that, for every $k\geq 1$,
    \begin{equation}
      \lim _{n \rightarrow \infty, \\ n \in I} \mu(f^{-n}(B_k)) = 0 \ \ \ \  \& \ \ \ \lim_{k \rightarrow \infty} \mu(B_k) =0, \tag{DC1} \label{DC 1}
    \end{equation}
    and, for sufficiently large $k$,  
      \begin{equation}
          \frac{1}{2^{m_k-1}} \left | \Big\{1\leq n\leq2^{m_k-1}: \mu(f^{-n}(B_k))>\frac{1}{16}\Big\} \right |>\frac{1}{16} \tag{DC2}\label{DC 2}.
      \end{equation}            

We now define the relevant sets. For each $k \in \mathbb{N}$, let  
    \[
    \begin{aligned}
        B_k:=  \big\{x \in X: \; & x(m_k) = 1  \ \& \  x(n) =0 \text{ for all } n >m_k\\ 
        &\text{ with } n \notin \{m_k, m_{k+1} , m_{k+2}, \ldots \} \big\}.
    \end{aligned}
    \]
    Recalling that $(n(1), n(2), \ldots)$ is the binary representation of $n$, for each $k\geq 1$ let
\begin{gather*}
    I_k :=\{n\in \mathbb{N}: n(i)=0  \text{ for }  i > m_{k+1}-3, \   \\
    \text{ and there exists } i\in \{m_k+3, m_k+3, \dots , m_{k+1}-3 \} \text{ with } n(i)=1\},\end{gather*}
and set    
    \[
        I:=\bigcup_{k \ge 1} I_k.
    \] 
The proof of Theorem~\ref{thm: dist chaos} follows from the next four claims.

\begin{claim}\label{claim:estIk}
    For each $k\geq 1$, we have $|I_k| = 2^{m_{k+1}-3} - 2^{m_{k}+2}.$   
\end{claim}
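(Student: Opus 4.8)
The plan is to recognize $I_k$ as a collection of binary strings and to count by subtracting a forbidden sub-collection. Since $\alpha_n = 2$ for all $n$, the representation $n = \sum_{i\geq 1} n(i)\prod_{j=1}^{i-1}\alpha_j$ is just the ordinary binary expansion $n = \sum_{i\geq 1} n(i)\,2^{i-1}$, so each $n\in\mathbb{N}$ is determined by its digit sequence $(n(1),n(2),\ldots)\in\{0,1\}^{\mathbb{N}}$ with finitely many nonzero entries. The first defining condition of $I_k$, that $n(i)=0$ for all $i>m_{k+1}-3$, says precisely that $n$ is supported on the positions $1,2,\ldots,m_{k+1}-3$; equivalently $0\le n<2^{m_{k+1}-3}$. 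The second condition requires that at least one of the positions in the block $\{m_k+3, m_k+4,\ldots,m_{k+1}-3\}$ carries a $1$.

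First I would count all digit sequences satisfying the support condition alone: with positions $1$ through $m_{k+1}-3$ free and all higher positions forced to $0$, there are exactly $2^{m_{k+1}-3}$ of them. Next I would count the sequences that satisfy the support condition but violate the existence condition, that is, those for which every position in $\{m_k+3,\ldots,m_{k+1}-3\}$ is zero. For such a sequence the only free positions are $1,2,\ldots,m_k+2$, giving $2^{m_k+2}$ sequences. Subtracting, the number of sequences meeting both conditions is $2^{m_{k+1}-3}-2^{m_k+2}$, which is exactly $|I_k|$.

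There is no genuine obstacle here; the content is entirely index bookkeeping, namely checking that the admissible range of free positions is $\{1,\ldots,m_{k+1}-3\}$ and that, once the block $\{m_k+3,\ldots,m_{k+1}-3\}$ is forced to zero, the remaining free positions are precisely $\{1,\ldots,m_k+2\}$. I would also note that the existence condition automatically forces $n\ge 1$ (some digit at position $\ge m_k+3$ equals $1$), so the all-zero string $n=0$ is correctly excluded by the subtraction and no separate check is needed to remain inside $\mathbb{N}$. Finally, since $m_{k+1}-3=4^{k+1}-3>4^k+2=m_k+2$ for all $k\ge 1$, the block is nonempty and the count $2^{m_{k+1}-3}-2^{m_k+2}$ is positive, consistent with $I_k$ being a nonempty finite set.
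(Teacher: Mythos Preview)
Your argument is correct and is essentially the same subtraction count as the paper's: the paper notes that $1\le n\le 2^j-1$ iff $n(i)=0$ for all $i>j$ and computes $(2^{m_{k+1}-3}-1)-(2^{m_k+2}-1)$, whereas you count all $2^{m_{k+1}-3}$ digit strings (including $n=0$) and subtract the $2^{m_k+2}$ failing the existence condition, observing that $n=0$ cancels. The two are the same idea with a cosmetic difference in how $n=0$ is handled.
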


\begin{proof}
    Note that for $n \in \mathbb{N}$ and $j\geq 1$, we have that $1 \le n  \le 2^j-1$ if and only if $n(i) = 0$ for all $i >j$. Using this and the definition of $I_k$, it follows that
    \begin{align*}
        |I_k| & =  (2^{m_{k+1}-3}-1) - (2^{m_{k}+2}-1)= 2^{m_{k+1}-3} - 2^{m_{k}+2}. 
    \end{align*}
\end{proof}
        
\begin{claim}\label{claim:density}
        The set $I$ has upper density 1.
\end{claim}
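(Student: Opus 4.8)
The plan is to exploit the fact that the upper density is a $\limsup$, so it is enough to produce a single sequence $N_k\to\infty$ along which the proportion of $I$ tends to $1$; there is no need (and, as I note below, no hope) to control the ratio for all $N$. The natural test sequence is $N_k:=2^{m_{k+1}-3}-1$, the largest integer whose binary expansion is supported on the coordinates $1,\dots,m_{k+1}-3$.

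First I would observe that, by the very definition of $I_k$, every $n\in I_k$ satisfies $n(i)=0$ for all $i>m_{k+1}-3$; by the elementary equivalence recalled in the proof of Claim~\ref{claim:estIk}, this means $n\le 2^{m_{k+1}-3}-1=N_k$. Hence $I_k\subseteq\{1,\dots,N_k\}$ and therefore $|I\cap\{1,\dots,N_k\}|\ge|I_k|$. Combining this with the count $|I_k|=2^{m_{k+1}-3}-2^{m_k+2}$ from Claim~\ref{claim:estIk} gives
\[
\frac{|I\cap\{1,\dots,N_k\}|}{N_k}\;\ge\;\frac{2^{m_{k+1}-3}-2^{m_k+2}}{2^{m_{k+1}-3}-1}.
\]
It then remains only to let $k\to\infty$. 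Since $m_k=4^k$ we have $m_{k+1}=4m_k$, so the error term compares as $2^{m_k+2}/2^{m_{k+1}-3}=2^{\,5-3m_k}\to0$; dividing numerator and denominator by $2^{m_{k+1}-3}$ shows the right-hand side tends to $1$. As the upper density never exceeds $1$, this forces $\overline{\mathrm{dens}}(I)=1$.

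The argument is short, and the only genuine point to get right is the choice of test sequence. The blocks $I_k$ are separated by long gaps: the smallest element of $I_{k+1}$ is of order $2^{m_{k+1}+2}$, while $I_k$ is confined below $2^{m_{k+1}-3}$. Consequently the ordinary density of $I$ oscillates, and its \emph{lower} density is in fact small, so one cannot replace the $\limsup$ by an honest limit. This is exactly why the statement is phrased with upper density, and why evaluating the counting ratio precisely at the top of each block, $N_k=2^{m_{k+1}-3}-1$, rather than at an arbitrary $N$, is what makes the estimate collapse to $1$.
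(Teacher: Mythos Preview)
Your argument is correct and essentially identical to the paper's proof: both pick the test sequence at the top of the block $I_k$ (you use $N_k=2^{m_{k+1}-3}-1$, the paper uses $2^{m_{k+1}-3}$), invoke Claim~\ref{claim:estIk} to bound $|I\cap\{1,\dots,N_k\}|\ge|I_k|$, and let $m_{k+1}-m_k\to\infty$ force the ratio to $1$. Your additional remark that the lower density of $I$ is small is a nice observation, though not needed for the claim.
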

   
\begin{proof} This follows directly from Claim~\ref{claim:estIk}.
        Indeed, for each $k\geq 1$,
        \begin{align*}
            \frac{ \left |\{n\in \mathbb{N}:1\leq n\leq 2^{m_{k+1}-3}  \}\cap I \right |}{2^{m_{k+1}-3}} & \ge \frac{ \left |\{n\in \mathbb{N}:1\leq n\leq 2^{m_{k+1}-3}  \}\cap I_k \right |}{2^{m_{k+1}-3}}\\ 
            & = \frac{ |I_k| }{2^{m_{k+1}-3}}= \frac{2^{m_{k+1}-3} - (2^{m_{k}+2})}{2^{m_{k+1}-3}}\\
            & = 1 -   2^{ m_k-m_{k+1}+5}
        \end{align*}
        Since $(m_k - m_{k+1}) \rightarrow -\infty$ as $k \rightarrow \infty$, we have that $I$ has upper density 1.   
    \end{proof}

    We will use the following observation concerning binary representation of $-n$ for $n \in \mathbb{N}$. Let $j$ be the smallest index such that $n(j)=1$. Then the binary expansion of $-n$ is obtained by setting all coordinates before $j$ equal to $0$, placing a $1$ at position $j$, and flipping all the subsequent coordinates of $n$. More precisely,
    \[
    -n(i) =\begin{dcases}
    0 & i < j ,\\
    1 & i = j,\\
     \overline{n(i)} & i >j,
\end{dcases}
\] 
where $\overline{n(i)}$ interchanges $0$ and $1$.

    \begin{claim}\label{claim:verifyDC1}
       The sets $\{B_k\}$ satisfy Condition~\emph{\ref{DC 1}}. 
    \end{claim}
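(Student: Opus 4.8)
The plan is to check the two limits in Condition~\eqref{DC 1} separately. The limit $\lim_{k\to\infty}\mu(B_k)=0$ follows at once from the product structure of $\mu$: since $B_k$ fixes the coordinate $x(m_k)=1$, forces $x(i)=0$ for every $i>m_k$ with $i\notin\{m_1,m_2,\dots\}$, and leaves the remaining coordinates free, one gets
\[
\mu(B_k)=p_{m_k}\prod_{\substack{i>m_k\\ i\neq m_j}}(1-p_i)\le p_{m_k}=2^{-k}\xrightarrow[k\to\infty]{}0 .
\]

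For the first limit I would rewrite $\mu(f^{-n}(B_k))=\mu(\{x:\,x+n\in B_k\})$, using $f^n(x)=x+n$, and analyze the binary sum $z:=x+n$ through its carry sequence $c_1=0$, $c_{i+1}=\mathrm{maj}(x(i),n(i),c_i)$, $z(i)=x(i)\oplus n(i)\oplus c_i$. Fix $k$ and take $n\in I_{k'}$ with $k'$ large; let $h$ be the top nonzero bit of $n$, so that $m_{k'}<m_{k'}+3\le h\le m_{k'+1}-3<m_{k'+1}$. The point of this location is that $h$ and every index $i$ with $h<i<m_{k'+1}$ lie strictly between two consecutive $m_j$ and exceed $m_k$; hence membership $z\in B_k$ forces $z(i)=0$ at all of them.

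The crux is a two-step carry argument at the top of $n$. First, $z(h)=0$ is incompatible with a vanishing carry out of position $h$: as $n(h)=1$, a zero carry-out would force $x(h)=c_h=0$ and therefore $z(h)=1$, a contradiction; thus the carry into position $h+1$ equals $1$. Second, since $n(i)=0$ for $i>h$, feeding this carry $1$ into the constraints $z(i)=0$ for $h<i<m_{k'+1}$ propagates it and forces $x(i)=1$ on the whole block. Therefore
\[
f^{-n}(B_k)\subseteq\{x:\,x(i)=1\ \text{for}\ h<i<m_{k'+1}\},\qquad
\mu(f^{-n}(B_k))\le\!\!\prod_{h<i<m_{k'+1}}\!\!p_i\le 2^{-(m_{k'+1}-1)} .
\]
Since the blocks $I_{k'}$ are finite and occupy disjoint increasing ranges of $\mathbb{N}$, letting $n\to\infty$ in $I$ forces $k'\to\infty$, and the bound tends to $0$. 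The main obstacle is exactly this control of carry propagation; the $\pm 3$ buffers built into the definition of $I_k$ are what guarantee that the forced block $h<i<m_{k'+1}$ is nonempty and avoids the free coordinates $m_j$, so that each forced coordinate contributes a genuinely small factor $p_i=2^{-i}$.
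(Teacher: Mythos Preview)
Your proof is correct. Both your argument and the paper's hinge on the same structural observation: for $n\in I_{k'}$ with $k'\ge k$, the top nonzero bit $h$ of $n$ lies in the buffer zone $(m_{k'},m_{k'+1})$, so the coordinates of $z\in B_k$ at and just above $h$ are forced to be $0$, and one then reads off from binary addition that some coordinate of $x\in f^{-n}(B_k)$ must equal $1$ at an index where $p_i$ is tiny.

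The execution differs slightly. The paper works in the other direction, using the explicit binary expansion of $-n$: it adds $-n$ to $x\in B_k$, locates a single index $i'$ with $-n(i')=0$ and $-n(i'+1)=1$, observes that $x(i')=x(i'+1)=0$ kills any carry into position $i'+1$, and concludes $f^{-n}(x)(i'+1)=1$; one forced coordinate suffices. You instead analyze $\{x:x+n\in B_k\}$ by a forward carry-propagation: the constraint $z(h)=0$ together with $n(h)=1$ forces a carry out of position $h$, and then the constraints $z(i)=0$, $n(i)=0$ for $h<i<m_{k'+1}$ keep the carry alive and force $x(i)=1$ on the entire block. Your route avoids the formula for $-n$ and yields a (needlessly) sharper bound; the paper's route is marginally slicker in that it pins down only one coordinate. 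The underlying idea is the same.
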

    \begin{proof} First, observe that $\mu(B_k) < 1/k$ for every $k\in\mathbb{N}$, and therefore 
    \[
    \lim_{k \rightarrow \infty} \mu(B_k) =0.
    \]

    We now verify the remaining part of Condition~\ref{DC 1}.  Fix $k \in \mathbb{N}$ and let $n\in I_l$ with $l>k$. By the definition of $I_l$, there exists an index $m_l+3\leq i\leq m_{l+1}-3$ such that $n(i)=1$ and $n(j)=0$ for all $j>i$, i.e. $i$ is the biggest index of $n$ in the binary representation with value 1. 
    
    Since $n(i)=1$ and all higher coordinates vanish, we have $-n(j)=1$ for all $j > i$. Moreover, either $-n(i) =0$ or $-n(i-1)=0$, depending on the first coordinate of binary representation of $n$ which is 1.  
    Consequently, there exists at least one index $m_l+2\leq i' \leq m_{l+1}-3$ such that $-n(i')=0$. We choose $i'$ to be the largest such index. Then, $-n(i')=0$ and $-n(i'+1) =1$, and clearly  $m_l < i' < i'+1 < m_{l+1}$. Since $l >k$, for all $x \in B_k$ we have that $x(i') = x(i'+1) =0$. Hence, $f^{-n} (x)(i'+1)=1$ as there is no  carry over to the ($i'+1$)-th coordinate when computing the sum $x+(-n)$ in its binary representation. Thus we have shown that $f^{-n}(B_k) < 1/l$ for all $l > k$ and $n \in I_l$, implying that $\lim _{n \rightarrow \infty, \\ n \in I} \mu(f^{-n}(B_k)) = 0$, and completing the verification of \ref{DC 1}.
    \end{proof}
    
    \begin{claim}\label{claim:dc2}
        Sets $\{B_k\}$ satisfy Condition\emph{~\ref{DC 2}}.
    \end{claim}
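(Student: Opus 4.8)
The plan is to produce, for a proportion of $n$ exceeding $1/16$, an explicit measurable set contained in $f^{-n}(B_k)$ whose measure is bounded below by a constant larger than $1/16$. The whole computation localizes to the first $m_k$ coordinates, the point being that every $n$ in the relevant range has $n(i)=0$ for $i>m_k$, so adding $n$ can reach the coordinates above $m_k$ only through a carry out of position $m_k$.

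Concretely, I would restrict attention to those $n$ with $n(m_k)=1$ and take as witness the set
\[
E_k := \{x\in X : x(i)=0 \text{ for } i<m_k,\ x(m_k)=0,\ x(j)=0 \text{ for every non-special } j>m_k\},
\]
leaving the special coordinates above $m_k$ unconstrained. For $x\in E_k$ the low part of $x$ vanishes, so adding $n$ produces no carry into position $m_k$; at position $m_k$ one gets $x(m_k)+n(m_k)=0+1=1$ with no carry escaping, and the coordinates above $m_k$ are left untouched. Hence $x+n\in B_k$, i.e.\ $E_k\subseteq f^{-n}(B_k)$. The measure of $E_k$ factors as
\[
\mu(E_k)=\Big(\prod_{i<m_k}(1-p_i)\Big)\,(1-2^{-k})\prod_{\substack{j>m_k\\ j\ \text{non-special}}}(1-2^{-j}),
\]
and the sparsity $\sum_i p_i=O(1)$ gives $\prod_{i<m_k}(1-p_i)\ge \big(\prod_{i\ge1}(1-2^{-i})\big)^2\approx 0.083$, an absolute constant exceeding $1/16$. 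Since the remaining two factors tend to $1$, this yields $\mu(f^{-n}(B_k))\ge\mu(E_k)>1/16$ for all sufficiently large $k$, uniformly over the chosen $n$.

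Finally I would count: the integers $n$ in the range with $n(m_k)=1$ make up a fixed proportion (one half of a full dyadic block), which is far above $1/16$, so \eqref{DC 2} follows. The main obstacle is the carry bookkeeping under the non-uniform measure $\mu$: because $\mu$ is not translation invariant one cannot argue by symmetry, and one must verify both that no carry escapes position $m_k$ on $E_k$ and that the no-carry event has probability bounded below by the explicit constant $\big(\prod_{i\ge1}(1-2^{-i})\big)^2>1/16$ uniformly in $k$. Pinning down the density of the good $n$ in the precise range of summation and matching it against the two threshold constants $1/16$ is the delicate point.
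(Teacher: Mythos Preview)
Your approach is correct and genuinely simpler than the paper's. The core step---for $n$ with $n(m_k)=1$ and $n(i)=0$ for $i>m_k$, the sum $x+n$ is carry-free whenever $x$ vanishes on the first $m_k$ coordinates, so $E_k\subseteq f^{-n}(B_k)$---is exactly right, and the measure bound $\mu(E_k)>Q^2>1/16$ with $Q=\prod_{i\ge1}(1-2^{-i})\approx0.289$ holds. In fact it holds for \emph{every} $k$, not only large $k$: a short regrouping gives
\[
\mu(E_k)=\prod_{i\notin\{m_1,m_2,\ldots\}}(1-2^{-i})\cdot\prod_{j=1}^{k}(1-2^{-j})>Q\cdot Q.
\]
The count of admissible $n$ in the range $[1,2^{m_k}-1]$ is then one half, well above $1/16$.

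The paper argues differently: it writes $f^{-n}(x)=x+(-n)$ and must contend with the fact that the odometer expansion of $-n$ has all $1$'s from some point on, so carries propagate upward. This forces a dichotomy on the carry bit out of coordinate $m_k-2$ and an $n$-dependent witness set $D(n,c(n))\times\{0\}\times\{0\}\times T_k$, with the first $m_k-2$ coordinates constrained only to lie in a half-measure set. The payoff is that the paper's bound $\tfrac12\cdot\tfrac12\cdot\tfrac12\cdot\tfrac12=\tfrac1{16}$ does not rely on the numerical accident $Q^2>1/16$; it would survive if the $p_i$ were larger. Your route trades that robustness for a trivial carry analysis and a single witness set independent of $n$.

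One caveat: as literally displayed, Condition~(DC2) uses the range $[1,2^{m_k-1}]$, in which only the single integer $n=2^{m_k-1}$ has $n(m_k)=1$, and your ``one half of a full dyadic block'' would collapse. The paper's own proof, however, works explicitly over $[1,2^{m_k}-1]$, so the exponent in the displayed (DC2) is evidently a typo and your implicit reading of the range is the intended one.
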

    \begin{proof}
        Fix $k>1$ sufficiently large so that $\mu_{m_k-1}(0)$, $\mu_{m_k}(0)$ and $\mu_{[m_k+1,\infty)}(T_k)$ are all greater than $1/2$, where $T_k$ denotes the projection of $B_k$ onto coordinates  $[m_k+1, \infty)$. This is possible since $\mu_n(0) \rightarrow 1$ as $n \rightarrow \infty$, and $ \prod_{n \notin \{m_1, m_2\ldots,\}} \mu_n(0) > 0$. 
        
        For each $1 \le n \le 2^{m_k-2} -1$ and $ c \in \{0,1\}$ we consider the following set
\[
\begin{aligned}
D(n,c) := \Big\{ x \in \prod_{i=1}^{m_k-2} A_i :&
(x(1),\ldots,x(m_k-2)) + (-n(1),\ldots,-n(m_k-2)) \\
&\text{has carry over } c \text{ at the coordinate  } m_k-1 \Big\}.
\end{aligned}
\]
Note that for each $n$, $D(n,0) \cup D(n,1) = \prod_{i=1}^{m_k-2}A_i$ and hence there exists $c(n) \in \{0,1\}$ such that 
\[
\mu_{[1,m_k-2]}(D(n,c(n))) \ge 1/2.
\] 
For $i \in \{0,1\}$, let
\[
F_i:= \{1 \le n \le 2^{m_k-2} -1: c(n) =i \}.
\]
Since $F_0 \cup F_1 = [1, 2^{m_k-2} -1]$, either 
\[
\frac{|F_0|}{2^{m_k-2}-1}\ge \frac{1}{2}
\quad\text{or}\quad
\frac{|F_1|}{2^{m_k-2}-1}\ge \frac{1}{2}.
\]
We will show that there is a set $J\subseteq[1,2^{m_k}-1]$ with
\[
\frac{|J|}{2^{m_k}-1}\ge \frac{1}{16},
\] 
such that $\mu(f^{-n}(B_k)) \ge 1/16$ for all $n \in J$. This verifies \ref{DC 2}. We consider two cases.

\medskip
\noindent\textbf{Case 1}:  $|F_0|/(2^{m_k-2} -1) \ge 1/2. $ 

Let $J$ be the set of all $n \in [1, 2^{m_k}-1]$ such that 
\[
(-n(1),\ldots,-n(m_k-2))\in F_0,\quad -n(m_k-1)=0\quad \text{and}\quad-n(m_k)=1.
\]
Note that $|J| = |F_0|$. Hence, 
\begin{align*}
    \frac{|J|}{2^{m_k} -1 } & \ge   \frac{\frac{1}{2} \cdot (2^{m_k-2} -1)}{2^{m_k} -1 } >  \frac{\frac{1}{2} \cdot (2^{m_k-3}) }{2^{m_k}  } = \frac{1}{16}. \\
\end{align*}

Fix $n\in J$ and let $x\in B_k$ with $x(m_k-1)=0$. Denote by $y:=f^{-n}(x)=x+(-n)$. Then
\begin{align*}
    f^{-n}(x)&=x+(-n)\\
    &=(x(1),\dots,x(m_k-2), 0,1,x(m_k+1), x(m_k+2), \ldots )\\
    &+(-n(1),\dots,-n(m_k-2), 0,1,1, \ldots)\\
    &=(y(1),\dots,y(m_k-2),0,0,x(m_k+1), x(m_k+2), \ldots ).
\end{align*}
Note that $(y(1),\dots,y(m_k-2)) \in D(n,c(n))$. Using the facts that $c(n)=0$ (i.e. the carry over at step $m_k-1$ is $0$), $x(m_k-1) =0$ and $-n(m_k-1)=0$, we obtain that $y(m_k-1) =0$. The fact that $x(m_k) =1$ and $-n(m_k)=1$ gives us that $y(m_k)=0$ and a carry over of $1$ at step $m_k+1$. This implies that $y(i)=x(i)$ for all $i \ge m_k+1$.

Now we have 
\begin{align*}
    f^{-n}(B_k)&\supseteq \{x+(-n): x\in B_k \text{ and } x+(-n) \text{ has carry over } 0 \text{ at step } (m_k-1)   \}\\
    &\supseteq D(n,0)\times \{0\}\times \{0\} \times T_k.
\end{align*}
Since $\mu$ is the product measure, we have
\begin{align*}
\mu(f^{-n}(B_k))
&\ge \mu_{[1,m_k-2]}(D(n,0)) \cdot \mu_{m_k-1}(0) \cdot \mu_{m_k}(0) \cdot \mu_{[m_k+1,\infty)}(T_k) \ge \frac{1}{16}.
\end{align*}
Thus $\mu(f^{-n}(B_k)) \geq 1/16$ for all $n \in J$.

\medskip
\noindent\textbf{Case 2:} $ |F_1|/(2^{m_k-2}-1)\ge 1/2$.

Let $J$ be the set of all $n\in[1,2^{m_k}-1]$ such that
\[
(-n(1),\ldots,-n(m_k-2))\in F_1,\quad -n(m_k-1)=1 \quad \text{and} \quad -n(m_k)=0.
\]
The estimate as in Case~1 yields that
\[
\frac{|J|}{2^{m_k}-1}\ge \frac{1}{16}.
\]
Now we proceed, as in the case above,  to show that $\mu(f^{-n}(B_k)) \geq 1/16$ for all $n \in J$.

Fix $n\in J$ and let $x\in B_k$ with $x(m_k-1)=0$. Denote by $y:=f^{-n}(x)=x+(-n)$. Then

\begin{align*}
    f^{-n}(x)&=x+(-n)=(x(1),\dots,x(m_k-2), 0,1,x(m_k+1), x(m_k+2), \ldots )\\
    &+(-n(1),\dots,-n(m_k-2), 1,0,1, \ldots)\\
    &=(y(1),\dots,y(m_k-2),0,0,x(m_k+1), x(m_k+2), \ldots ).
\end{align*}
Note that $(y(1),\dots,y(m_k-2)) \in D(n,c(n))$. Using the facts that $c(n)=1$ (i.e. the carry over at step $m_k-1$ is $1$), $x(m_k-1) =0$ and $-n(m_k-1)=1$, to obtain that $y(m_k-1) =0$. This gives us a carry over of $1$ at step $m_k$. This fact and using that $x(m_k) =1$ and $-n(m_k)=0$, we obtain that $y(m_k)=0$ and a carry over of $1$ at step $m_k+1$, yielding $y(i)=x(i)$ for all $i \ge m_k+1$.

Now we have 
\begin{align*}
    f^{-n}(B_k)&\supseteq \{x+(-n): x\in B_k \text{ and } x+(-n) \text{ has carry over } 1 \text{ at step } (m_k-1)  \}\\
    &\supseteq D(n,1)\times \{0\}\times \{0\} \times T_k.
\end{align*}
Since $\mu$ is the product measure, we have
\begin{align*}
\mu(f^{-n}(B_k))
&\ge \mu_{[1,m_k-2]}(D(n,0)) \cdot \mu_{m_k-1}(0) \cdot \mu_{m_k}(0) \cdot \mu_{[m_k+1,\infty)}(T_k) \ge \frac{1}{16}.
\end{align*}
Thus $\mu(f^{-n}(B_k)) \geq 1/16$ for all $n \in J$.
\end{proof}

\section*{Acknowledgement}  
    The authors would like to thank Nilson Bernardes and Alfred Peris for some valuable conversations concerning some of the work in this article.
    
\bibliographystyle{acm}
\bibliography{bibliografia}
\end{document}